\def\+{\oplus}
\newcommand{\R}{{\mathbb R}}
\newcommand{\cI}{{\mathcal I}}
\newcommand{\cB}{{\mathcal B}}
\newcommand{\cL}{{\mathcal L}}
\newcommand{\cC}{{\mathcal C}}
\newcommand{\cV}{{\mathcal V}}
\newcommand{\cR}{{\mathcal R}}
\newcommand{\td}{\tilde}
\renewcommand{\phi}{\varphi}
\renewcommand{\a}{{\alpha}}
\newcommand{\g}{{\gamma}}
\renewcommand{\d}{{\delta}}
\newcommand{\e}{\epsilon}
\newcommand{\G}{\Gamma}
\renewcommand{\O}{\Omega}
\newcommand{\pd}{\partial}
\newcommand{\im}{\mathfrak{i}}
\newcommand{\Inc}{\hbox{Inc}}
\def\squareforqed{\hbox{\rlap{$\sqcap$}$\sqcup$}}
\def\qed{\ifmmode\else\unskip\quad\fi\squareforqed}
\def\smartqed{\def\qed{\ifmmode\squareforqed\else{\unskip\nobreak\hfil
\penalty50\hskip1em\null\nobreak\hfil\squareforqed
\parfillskip=0pt\finalhyphendemerits=0\endgraf}\fi}}
\newtheorem{remark}{\textbf{Remark}}[section]
\newtheorem{lemma}{\textbf{Lemma}}[section]
\newtheorem{theorem}{\textbf{Theorem}}[section]
\newtheorem{proposition}{\textbf{Proposition}}[section]
\newtheorem{definition}{\textbf{Definition}}[section]
\newtheorem{example}{\textbf{Example}}[section]
\numberwithin{equation}{section}
\begin{document}
\title[Eikonal equation on LEP space]{Eikonal equations on
ramified spaces}

\author{Fabio Camilli}
\address{Dipartimento di Scienze di Base e Applicate per l'Ingegneria,  ``Sapienza" Universit{\`a}  di Roma,
 00161 Roma, Italy, (e-mail:camilli@dmmm.uniroma1.it)}
\author{Dirk Schieborn}
\address{Eberhard-Karls University,  T\"ubingen, Germany  (e-mail:Dirk@schieborn.de)}
\author{Claudio Marchi}
\address{Dip. di Matematica, Universit\`a di Padova, via Trieste 63, 35121 Padova, Italy (email: marchi@math.unipd.it).}
\date{\today}

\begin{abstract}
We generalize the results  in \cite{sc} to
higher dimensional ramified spaces. For this purpose we introduce ramified manifolds
and, as special cases, locally elementary polygonal ramified spaces (LEP spaces). On LEP
spaces we develop a theory of viscosity solutions for  Hamilton-Jacobi equations, providing
existence and uniqueness results.
\end{abstract}
%
   \subjclass{Primary 49L25; Secondary 58G20, 35F20}
%
   \keywords{Hamilton-Jacobi equation; ramified space; viscosity solution; comparison principle.}
\maketitle
\section{Introduction}
In \cite{s}, \cite{sc}  a theory of viscosity solution  for Hamilton-Jacobi equations of
eikonal type  on topological networks was developed  providing existence, uniqueness and stability results.
In this  paper  we generalize these results   to
higher dimensional {\it ramified spaces}. \par
In literature, many different ways of introducing  ramified spaces  (cf.
\cite{lu}, \cite{ni1}, \cite{ni2}) or branched manifolds (cf. \cite{wi}) are available.
The definitions vary in different aspects, depending on the kind of theory to be developed.
In a general approach, subsets of classic differentiable manifolds are glued together
along parts of their boundaries by means of the topological gluing operation. Another,
more specific, definition requires the uniqueness of the  tangent space  at ramification
points (cf. \cite{wi}) by describing how the branches should be situated relatively to each
other in the ambient space.\par
 Here we choose an approach which is very similar to the
concept of a manifold with boundary. The basic idea is that, in contrast to classic topological manifolds, besides  points at which
it is locally homeomorphic to an Euclidean space ({\it simple} points), a ramified topological manifold may also contain
{\it ramification} points at which it is locally homeomorphic to some kind of ``Euclidean ramified
space''. The latter,  called \emph{elementary ramified space}, can be visualized as a collection of closed Euclidean half spaces glued together at their boundary hyperplanes.
Consequently, small neighborhoods of a given ramification point split up into different
branches corresponding to the branches of the homeomorphic elementary ramified space.
If we endow these ramified topological manifolds with suitable differentiable structures, then
we end up with an extension of the concept of tangent space at ramification points. This
generalization should have the property that a real function defined in a neighborhood
of a ramification point can be differentiated in the direction of each branch (of course, incident at the ramification point). In other words, each branch contributes a different tangent space.\par
Once we have introduced the differentiable structure on ramified spaces, we will see that for each of the
branches emanating from a fixed ramification point $x$, a normal direction at $x$ on
this branch  is well-defined. The possibility to differentiate in the normal
directions at ramification points is crucial for our theory, as it will turn out that a general
definition of viscosity solutions on ramified manifolds depends on this very possibility.
In fact, the notion of viscosity solutions introduced in \cite{s}, \cite{sc} differs from its classical origin by
the transition conditions we have additionally imposed at ramification points. The concept of $(j, k)$-test functions (see definition~\ref{testfunction}) allows us to  ignore  the ramification set by treating two branches as a single connected one; indeed, the  $(j, k)$-differentiability links the two derivatives of a function with
respect to a given pair of branches which are incident at the same ramification point. It suggests itself
to apply this pattern in case of manifolds of dimension $n$ which have a certain manifold
of dimension $n - 1$ in common, as long as this manifold is smooth enough to ensure that
we have well-defined normal derivatives with respect to each incident  branch manifold.\par
In order not to get lost in too general approaches, we restrict ourselves to a rather simple, but still sufficiently general,
kind of ramified manifolds, the so-called \emph{locally elementary polygonal ramified spaces} (briefly, LEP
spaces), which are characterized by two main criteria: on the one hand, LEP spaces
are ramified spaces in the sense of Lumer \cite{lu} (see definition \ref{ramifiedspace}) meeting the additional
requirement that each branch is a flat $n$-dimensional submanifold of $\R^{n+1}$. On the other
hand, they are ramified manifolds in the sense described above. Hence they can be
visualized as polygonal subsets of hyperplanes in $\R^{n+1}$ which are glued together along
certain edges, with the restriction that  corner points cannot occur. The term  ``locally
elementary''  refers to the fact  that they are locally homeomorphic to an
open subset either of a $n$-dimensional Euclidean space or of an elementary ramified space.
Once the notion of viscosity solutions has been correctly extended to LEP
spaces, the development of the theory follows the line  of the one devised for topological networks in \cite{sc}. Consequently
we prove  a comparison principle giving the uniqueness of the continuous viscosity solution. Moreover we show existence
of the viscosity solution via an adaptation of the Perron's method and we also provide a representation formula
for the solution of the Dirichlet problem.\par
We mention that Hamilton-Jacobi equation and viscosity solutions on differentiable manifolds have been studied
in \cite{afl},  \cite{mm}.  The theory of linear and semilinear differential equations  on nonsmooth manifolds such as  ramified spaces  has been developed, since the seminal paper   \cite{lu}, in a large extent \cite{lls} and it is currently an active field of research
(\cite{ll}, \cite{kl}). For fully nonlinear equations such as Hamilton-Jacobi equations, the theory is at the beginning and, besides \cite{sc} and the companion paper \cite{cfs},    different approaches have been  pursued for the case of networks in the recent papers \cite{acct} and \cite{imz} and for
 stratified domains in \cite{bh} . The present  paper can been seen as a first  attempt to extend the theory of viscosity solutions to general ramified spaces.\par
The paper is organized as follows. In section \ref{section1} we introduce the definition and give various examples of ramified spaces. In section \ref{section2}  we study the differential structure
of a  ramified space. Section \ref{section3} is devoted to the notion of viscosity solution, while in section \ref{section4} and \ref{section5}
we prove uniqueness and, respectively, existence  of a viscosity solution. In section \ref{section6} we consider the Dirichlet problem
and we obtain a representation formula for its solution.
\section{Ramified spaces}\label{section1}
In this section we introduce the geometric objects we will study in this paper.
The general definition of ramified space is due to Lumer \cite{lu}.
\begin{definition}\label{ramifiedspace}
Let $R^\star$ be a non-empty, locally compact space with a countable basis. Let $\cL=\{R_j\}_{j\in J}$  be a countable family of non empty open subsets $R_j$ of $R^\star$ and let $N^\star_E$ be a closed, possible empty, subset of $N^\star:=R^\star\setminus \cup_{j\in J}R_j$ with the property that it contains each point of $N^\star$ which is contained in the boundary  of exactly one $R_j$. Then $R:=R^\star\setminus N_E^\star$ is a ramified space (induced by $(R^\star,\cL,N^\star_E)$) if
\begin{itemize}
\item $\bar R_j\cap\bar R_k\subset \pd\bar R_j\cap\pd\bar R_k$ for all $j\neq k$, $j,k\in J$,
\item $R^\star=\cup_{j\in J}\bar R_j$,
\item $\{R_j\}_{j\in J}$   is locally finite in $R^\star$,
\item $R$ is connected.
\end{itemize}
The set $\pd R=N_E^\star$ is called the boundary of $R$ while the set $N_R=N^\star\setminus N^\star_E$  the ramification space of $R$. We set $\partial_R R_j:=\partial R_j \cap N_R$ and $\tilde R_j:=R_j\cup \partial_R R_j$.
\end{definition}
We also consider polygonal ramified space.
\begin{definition}\label{polramifiedspace}
A ramified space $R$ is said a  $n$-dimensional polygonal ramified space if
\begin{itemize}
  \item $R^\star\subset \R^{n+1}$ with the endowed topology,
  \item  For each $j\in J$, there is a hyperplane $P_j\subset\R^{n+1}$ such that $R_j$ is a bounded subset of $P_j$,
  \item  All $P_j$, $j\in J$, are pairwise distinct.
\end{itemize}
\end{definition}
We give some examples of ramified spaces and polygonal ramified space.
\begin{example}\label{TN}
A topological network   is a collection of pairwise different points in $\R^n$ connected by continuous, non self-intersecting curves.
More precisely (see \cite{sc}), let $V=\{v_i,\,i\in I\}$ be a finite collection of pairwise different points in $\R^n$ and
let $\{\pi_j,\,j\in J\}$ be a finite collection of continuous, non self-intersecting curves in $\R^n$ given by
$\pi_j:[0,l_j]\to\R^n,\, l_j>0,\,j\in J$. Defined
 $e_j:=\pi_j((0,l_j))$, $\bar e_j:=\pi_j([0,l_j])$  and $E:=\{e_j:\, j\in J\}$,   assume that
\begin{itemize}
  \item[i)] $\pi_j(0), \pi_j(l_j)\in V$ for all $j\in J$,
  \item[ii)]$\#(\bar e_j\cap V)=2$ for all $j\in J$,
  \item[iii)] $\bar e_j\cap \bar e_k\subset  V$, and $\#(\bar e_j\cap \bar e_k)\le 1$ for all $j,k \in J$, $j\neq k$.
  \item[iv)] For all $v, w \in V$ there is a path with end-points  $v $ and $w$ (i.e.  a sequence of edges $\{e_j\}_{j=1}^N$ such that
  $\#(\bar e_j\cap \bar e_{j+1})=1$ and  $v\in \bar e_1$, $w\in \bar e_N$).
\end{itemize}
Then $\bar \G:=\bigcup_{j\in J}\bar e_j\subset \R^n$ is called a (finite)  \emph{topological network} in $\R^n$.\\
A topological network is a ramified space with $R^\star=V\cup E$,  $R_j=e_j$, $N^\star=V$, $N^\star_E$ any subset of $V$ containing
all the vertices with only one incident edge and $N_R=N^\star\setminus N^\star_E$.
\end{example}
%
\begin{example}\label{nTN}
Let $\G$ be a topological network as in Example~\ref{TN}. For $v\in V$, set $deg(v)$ the number of the arc $e_j$, $j\in J$, incident at the vertex $v$ and
define $\tilde \G=\G\setminus\{v\in V:\, deg(v)=1\}$. Then for $n\ge 2$, the set $M:= \tilde \G\times \R^{n-1}$ is called a \emph{$n$-dimensional
topological network} (\cite{ni2}). In this case $R^\star=(V\cup E)\times \R^{n-1}$, $R_j=e_j\times \R^{n-1}$, $N^\star=V\times \R^{n-1}$,
$N^\star_E$ any subset of $N^\star$ containing the set $\cup_{\{v: deg(v)=1\}}(v\times \R^{n-1})$.
\end{example}
If the edges $\{e_j\}_{j\in J}$ of a topological network $\G$ are segments, then $\G$  and the corresponding $n$-dimensional
topological networks defined as in example \ref{nTN} are  polygonal  ramified spaces in the sense of definition \ref{polramifiedspace}. See figure~1.
\begin{example}\label{cube}
Let $\O^\star$ be the surface of the $(n+1)$-dimensional cube $C^{n+1}\subset\R^{n+1}$ and let $\O_j$, $j=1,\dots, 2(n+1)$, be its open faces.
Furthermore let $\partial \O:=N^\star_E$ be any closed (possible empty) subset of the union of the edges of the cube with the property that $\O=\O^\star\setminus N^\star_E$ is connected. Then $\O:=\O^\star\setminus N_E^\star$ is a polygonal ramified space.
\end{example}
An important example of ramified space is the \emph{elementary ramified space}, since it is the space  of  the  parameters  for   LEP spaces
and ramified manifolds we will define in the following.
\begin{definition}\label{ERS}
Given $n\ge 1$ and $r\ge 2$, a  $n$-dimensional elementary ramified space of order  $r$, denoted by $\cR^n_r$, is the union of $r$
half spaces $\cR^n_{r,j}$, $j\in\{1,\dots,r\}$, of dimension $n$ which are included in $\R^{n+1}$ and have $\R^{n-1}$ in common.\\
If we set   $\R^n_{\ge 0}=\{(x_1,x')\in\R\times\R^{n-1}:\,x_1\ge 0\}$, then we can identify $\cR^n_r$ and $\cR^n_{r,j}$ with
\begin{align*}
 &\cR^n_r=(\R^n_{\ge 0}\times\{1,\dots,r\})/\Re\\
 &\cR^n_{r,j}= \{(x,j):\, x\in \R^n_{\ge 0}\}
\end{align*}
where $\Re$ is the equivalence relation which for each choice of $x'\in\R^{n-1}$  identifies the points $((0,x'),j)\in \cR^n_j$ for $j\in\{1,\dots,r\}$. The set $\cR^n_{r,j}$ is said  the (closed) \emph{$j$-branch} of $\cR^n_r$ while the set
\[ \Sigma^n_r=\{((0,x'),j):\, x'\in\R^{n-1},\, j\in\{1,\dots,r\}\}
 \]
is called the \emph{ramification space}  of $\cR^n_r$.\par
Endowed with the topology induced by the path distance, $\cR^n_r$ is a connected, separable, locally compact topological
space. Observe that  $\cR^n_r$ can be identified with $\R^n$ if $r=2$.
\end{definition}
In order to give the definition of ramified manifolds, we need to introduce the notion of diffeomorphism on $\cR^n_r$.
\begin{definition}\label{diffeo}
\emph{1)} Let $U\subset\cR^n_r$ be an open set and $f:U\to \R^m$.  Then, for $1\le l\le \infty$,  $f$ is said $C^l$-differentiable at $x\in U$
    if the following holds:
    \begin{itemize}
    \item[i)] If $x\in\cR^n_{r,j}\setminus \Sigma^n_r$, for some $j\in \{1,\dots,r\}$, then $f$ is $l$ times continuously differentiable at $x$ in the standard sense.
      \item[ii)] If  $x\in\Sigma^n_r$, then for each $j\in\{1,\dots,r\}$, there is a domain  $V_j\subset\R^n$ and $f_j\in C^l(V_j,\R^m)$ such that
      $x\in V_j$ and $f_j\equiv f$ on $V_j\cap \cR^n_{r,j}$ (having identified $\cR^n_{r,j}$ with $\R^n_{\ge 0}$).
    \end{itemize}
\emph{2)} Let $U,V\subset\cR^n_r$ be open sets and $\phi:U\to V$ an homeomorphism. Then $\phi$ is said a diffeomorphism if for all $j\in\{1,\dots,r\}$ the respective restrictions of  $\phi$ and $\phi^{-1}$ to $\cR^n_{r,j}\cap U$ and to $\cR^n_{r,j}\cap V$ are $C^\infty$-differentiable.
\end{definition}
We are now ready to give the definition of topological ramified manifold and differentiable ramified manifold.
\begin{definition}
A set $M$ is called a $n$-dimensional topological ramified manifold if it is endowed with a Hausdorff topology and if  for any $x\in M$, there is
a neighborhood $U\subset M$ of $x$ such that there is an integer $r=r(x)\ge 2$, an open set $V\subset \cR^n_r$ with $V\cap \Sigma^n_r\neq \emptyset$
and a homeomorphism  $X:U\to V$ with $X(x)\in\Sigma^n_r$.
\end{definition}
The number $r(x)$ is called ramification order of $x$. A point $x\in M$ is said a simple point if $r(x)=2$, a ramification point if $r(x)\ge 3$.
The set of all ramification points is denoted by $\Sigma$ and it is called ramification space of $M$. If $x\in\Sigma$, we set
$\Inc_x:=\{j\in J:\,x\in\pd \O_j\}$.
\begin{remark}
Observe that, since $\cR^n_2$ can be identified with $\R^n$,    topological ramified manifolds are locally homeomorphic to a $n$-dimensional Euclidean space at simple points.
\begin{definition}
A set $M$ is called a $n$-dimensional differentiable ramified manifold if
$M$ is a $n$-dimensional topological ramified manifold
and there is a family of local charts $\{U_\a,X_\a\}$, i.e. open set $U_\a\subset M$
and injective mappings $X_\a:U_\a\to\cR^n_{r(\a)}$, with the following properties
\begin{itemize}
  \item[i)] For any $\a,\a'$ with $V=U_\a\cap U_{\a'}\neq \emptyset$, the sets $X_\a(V)$ and $X_{\a'}(V)$
  are open in $\cR^n_{r(\a)}$ and $\cR^n_{r(\a')}$, respectively. Moreover the map $\phi:X_\a(V)\to X_{\a'}(V)$
  given by $\phi:=X_{\a'}\circ X_\a^{-1}$ is a diffeomorphism in the sense of definition \ref{diffeo}.
  \item[ii)] $\bigcup_\a U_\a=M$.
  \item[iii)] The family $\{U_\a,X_\a\}$ is maximal with respect to the conditions i) and ii).
\end{itemize}
\end{definition}
\end{remark}

We introduce a class of flat ramified manifolds.
\begin{definition}\label{LEP}
A $n$-dimensional polygonal ramified space $\O$ (see definition \ref{polramifiedspace}) is called locally elementary if it is also a differentiable
manifold. Locally elementary ramified space will be called LEP spaces in the following.
\end{definition}

\begin{example}
Topological networks and $n$-dimensional topological networks are  topological ramified manifolds.
If the maps $\{\pi_j\}_{j\in J}$ in the definition of $\G$ are diffeomorphisms, they are also   differentiable ramified manifolds.
If  the edges $\{e_j\}_{j\in J}$ of $\G$ are segments, a $n$-dimensional topological network  is a   LEP space.\par
The set of ramification points $\Sigma$ is given by $\{v\in V:\,deg(v)>1\}$ for a topological network and by  $\cup_{r>2} M_r$ where $M_r=\cup_{\{v\in V:\,deg(v)=r\}}(\{v\}\times \R^{n-1})$ for a $n$-dimensional topological network. Note that $\Sigma=N_R$ for a LEP space.
\end{example}
\setlength{\unitlength}{1cm}
\begin{picture}(14,9.2)
 \put(0,0){\framebox(12,9.2)}
\put(0,8.8){ {\bf Figure 1}}
\linethickness{0.8mm}
\multiput(4,.9)(0,.4){15} {\line(0,1){0.2}}
\multiput(6,1.55)(0,.4){15} {\line(0,1){0.2}}
\linethickness{0.2mm}
\put(4,3.8){\line(-1,0){3.5}}
\put(4,3.8){\line(3,1){2}}
\put(4,3.8){\line(-1,1){2}}
\put(4,3.8){\circle*{.5}}
\put(4,3.81){\line(-1,0){3.5}}
\put(4,3.81){\line(3,1){2}}
\put(4,3.81){\line(-1,1){2}}
\put(3.2,3.3){\LARGE $v_1$}
\put(2,4){\LARGE $e_1$}
\put(2.9,5){\LARGE $e_2$}
\put(4.6,4.4){\LARGE $e_3$}
\put(6,4.42){\circle*{.5}}
\put(6,4.42){\line(3,-2){2}}
\put(6,4.41){\line(3,-2){2}}
\put(5.3,3.6){\LARGE $v_2$}
\put(7,3.8){\LARGE $e_4$}
\put(6,4.42){\line(4,1){3}}
\put(6,4.41){\line(4,1){3}}
\put(8.2,5.4){\LARGE $e_5$}
\put(6,4.42){\line(-1,1){1.6}}
\put(6,4.41){\line(-1,1){1.6}}
\put(5.25,5.25){\LARGE $e_6$}
\put(.5,3.8){\circle*{.2}}
\put(.7,3.3){\LARGE $v_3$}
\put(2,5.8){\circle*{.2}}
\put(2.2,5.8){\LARGE $v_4$}
\put(4.4,6){\circle*{.2}}
\put(4.55,6){\LARGE $v_5$}
\put(9,5.15){\circle*{.2}}
\put(9.2,5.15){\LARGE $v_6$}
\put(8,3.1){\circle*{.2}}
\put(8.2,3.1){\LARGE $v_2$}
\put(4,.8){\line(-1,0){3.5}}
\put(4,6.8){\line(-1,0){3.5}}
\put(.5,.8){\line(0,1){6}}
\put(.8,6){\LARGE $\Omega_1$}
\put(4,6.8){\line(-1,1){2}}
\put(2,6.8){\line(0,1){2}}
\multiput(2,6.6)(0,-1){4} {\line(0,-1){0.2}}
\multiput(3.5,1.3)(-1,1){2} {\line(-1,1){0.4}}
\put(2.1,7.6){\LARGE $\Omega_2$}
\put(4,.8){\line(3,1){2}}
\put(4,6.8){\line(3,1){2}}
\put(5.2,6.6){\LARGE $\Omega_3$}
\put(8,.1){\line(0,1){6}}
\put(6,7.45){\line(3,-2){2}}
\put(6,1.45){\line(3,-2){2}}
\put(7.2,5.8){\LARGE $\Omega_4$}
\put(6,7.48){\line(4,1){3}}
\put(6.4,1.55){\line(4,1){0.4}}
\put(7.2,1.75){\line(4,1){0.4}}
\put(8,1.95){\line(4,1){1}}
\put(9,2.2){\line(0,1){6}}
\put(8.2,7.4){\LARGE $\Omega_5$}
\put(6,7.48){\line(-1,1){1.6}}
\put(5.6,1.85){\line(-1,1){0.4}}
\put(4.8,2.65){\line(-1,1){0.4}}
\put(4.4,6.95){\line(0,1){2.12}}
\multiput(4.4,6.6)(0,-1){4} {\line(0,-1){0.2}}
\put(4.5,7.8){\LARGE $\Omega_6$}
\linethickness{0.8mm}
\put(8.3,.9){$V=\{v_i\}$}
\put(9.92,.9){, $E=\{e_i\}$}
\multiput(9.25,0.5)(0.4,0){2} {\line(1,0){0.2}}
\put(8.3,0.4){$\Sigma=$}
\put(9.92,0.4){, $R_j=\Omega_j$}
\end{picture}

\begin{example}
The cube in the example \ref{cube} is not locally homeomorphic to an elementary ramified space at the corner points. It is a LEP space if all the $2^n$ corner points are contained in $N_E^\star=\pd \O$.
\end{example}
\section{The differential structure of a ramified manifold}\label{section2}
In this section we extend the notion of tangent space to a differentiable ramified manifold. In fact, the  interpretation of
tangent vectors as equivalence classes of curves in $M$ can be easily transferred to ramification points.

Throughout this section, $M$ and $\Sigma$ stand respectively for a $n$-dimensional differentiable ramified manifold and for its ramification set.
Let us now introduce some definitions regarding the differential structure of $M$.

\begin{definition}\label{ldiff}
A continuous function $f:M\to\R$ is said to be $C^l$-differentiable
at $x\in M$ if for any local chart $(U,X)$ around $x$, the function $f\circ X^{-1}$ is $C^l$-differentiable in sense of definition \ref{diffeo}.
\end{definition}
\begin{definition}
Let $x\in \Sigma$ and $r=r(x)$.
Let $\g:(-\e,\e)\to M$ with $\g(0)=x$ be a continuous curve and $j\in \{1,\dots,r\}$. We say that $\g$ reaches $x$ from the branch $j$ whenever
there exists a chart $(U,X)$ with $x\in U$ and $\d>0$ such that
\begin{equation}\label{curve}
    \tilde \g(t)=(X\circ \g)(t)\in \cR^n_{r,j}\quad\text{for all $t\in (-\d,0)$}.
\end{equation}
\end{definition}
We denote by $\cC_j(x)$ the set of all the curves reaching $x$ from the branch $j$ and we set
\[\cC(x)=\cup_{j\in J}\cC_j(x).\]
\begin{definition}\label{tangentspace}
Let $x\in \Sigma$, $r=r(x)$ and  $\g_1$, $\g_2\in \cC(x)$. We say that $\g_1$ and $\g_2$ are equivalent  if for all functions $f:M\to\R$
which are $C^\infty$-differentiable at $x$ we have
\[(f\circ \g_1)'_-(0)=(f\circ \g_2)'_-(0)\]
where the  derivatives are left-sided. We denote the set of equivalence classes by $T_xM$, the tangent space of $M$ at $x$, and
we say that  $\xi\in T_xM$ is a $j$-tangent vector at $x$, $1\le j\le r$, if $\xi$ contains a curve reaching $x$ from
the branch $j$. We set
\[\xi(f):=(f\circ \g)'_-(0)\qquad \g\in\xi.\]
The set of all $j$-tangent vectors at $x$ ($j$-tangent space at $x$) is denoted by $T_x^j M$.  The set $T_x\Sigma:=\cap_j T_x^j M$ is called the $\Sigma$-tangent space at $x$ and any $\xi\in T_x\Sigma$ is said a  $\Sigma$-tangent vector at $x$.
\end{definition}
\begin{remark}
If $x\in\Sigma$, the tangent space $T_xM$ is not a vector space. Instead it can be identified with an elementary ramified space $\cR^n_r$, where
\[\cR^n_{r,j}=T_x^j M, \, 1\le j\le r,\qquad \Sigma^n_r=T_x\Sigma.\]
Hence $T_x\Sigma\subset T^j_x M$ can be identified with $\R^{n-1}$ and $T_x^j M$ with $\R^n_{\ge 0}$.
\end{remark}
\begin{definition}
Let $1\le j\le r$, $f:M\to\R$ continuously differentiable at $x$ (see def. \ref{ldiff}) and $\xi_1,\dots,\xi_n$ a basis of $T_x^j M$. We define the $j$-gradient
$D^j f\in T_x^j M$ of  $f$ at $x$  by
\begin{equation}\label{gradient}
D^jf(x):=\sum_{i=1}^n\xi_i(f)\xi_i.
\end{equation}
\end{definition}
We  consider the case of an elementary ramified space and we introduce some notations for the derivatives at the ramification set.
\begin{definition}\label{derivativeLP}
Let $r\ge 3$ and let $x\in\Sigma_r^n\subset\cR^n_r$. Let $u:\cR^n_r\to\R$ be continuously differentiable at $x$. We denote  by $\pd_1 u(x),\dots,
\pd_{n-1}u(x)$ the directional derivatives of $u$ at $x$ with respect to the canonical basis $e_1,\dots, e_{n-1}$ of $\Sigma^n_r\equiv \R^{n-1}$.
For $1\le j\le r$ we denote by $\pd_{\nu_j}u(x)$ the directional derivative of $u$ at $x$ with respect to the inward unit normal $\nu_j$ of
$\cR^n_{r,j}\equiv \R^n_{\ge 0}$ at $x$. See figure 2.
\end{definition}

We now restrict our attention to LEP spaces, which have the important property that around any given point we can always choose a chart induced by the canonical identification with the Euclidean space $\R^n$ or a suitable elementary ramified space. This is stated in the following proposition.
\begin{proposition}\label{canonical}
Let $\O$ be a LEP space and $\Sigma$ its ramification set. For any $x\in\O$, there is a neighborhood $V_x$ of $x$ and a canonical identification
$\im_x:V_x\to \im_x(V_x)$
where $\im_x(x)=0$ and
\begin{itemize}
  \item[i)] if $x\not\in\Sigma$ then  $\im_x(V_x) \subset \R^n$,
  \item[ii)] if $x\in\Sigma$ then $\im_x(V_x) \subset \cR^n_{r(x)}$ with
$\im_x(V_x\cap \Sigma)\subset\Sigma^n_{r(x)}$.
\end{itemize}
In the latter case, $\im_x$ induces a bijective map $\cI_x$ between the index set  $\Inc_x$ and the set $\{1,\dots,r(x)\}$.
\end{proposition}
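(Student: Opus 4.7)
The plan is to build $\im_x$ by hand from affine isometries of the ambient $\R^{n+1}$, exploiting the two structures carried by a LEP space: each branch $R_j$ is a flat piece of a hyperplane $P_j\subset\R^{n+1}$ (Definition~\ref{polramifiedspace}), while near any point $\O$ is locally modelled, as a ramified manifold, on either $\R^n$ or an elementary ramified space $\cR^n_{r(x)}$ (Definition~\ref{LEP}). The construction and the verification therefore split into the two cases of the statement.

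First I treat simple points $x\notin\Sigma$, for which $r(x)=2$. If $x$ lies inside a single open branch $R_j$, the identification $\im_x$ is simply an affine isometry $P_j\to\R^n$ sending $x$ to $0$, restricted to a small neighbourhood $V_x\subset R_j$. If instead $x$ lies on a common face $\partial_R R_j\cap\partial_R R_k$, I pick $V_x$ inside $\tilde R_j\cup\tilde R_k$, observe that $P_j\cap P_k$ is necessarily an $(n-1)$-dimensional affine subspace $L_x$ through $x$ (otherwise $P_j$ and $P_k$ would be parallel and could not share $x$), and unfold the two half-hyperplanes about $L_x$: on each side I apply the unique affine isometry fixing $L_x$ pointwise, sending $x$ to $0$, and mapping $R_j$ into $\{x_1\ge 0\}$ and $R_k$ into $\{x_1\le 0\}$. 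The two isometries agree on $L_x\cap V_x$ and piece together into the required injection $\im_x:V_x\to\R^n$.

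For a ramification point $x\in\Sigma$ with $r:=r(x)\ge 3$, local finiteness of $\{R_j\}$ in $R^\star$ makes $\Inc_x$ finite. A defining chart $(U,X)$ with $X(x)\in\Sigma^n_r$ restricts, on each $U\cap R_j$ with $j\in\Inc_x$, to a diffeomorphism onto an open piece of one $\cR^n_{r,j'}\setminus\Sigma^n_r$; the assignment $j\mapsto j'$ yields the required bijection $\cI_x:\Inc_x\to\{1,\dots,r\}$ and in particular forces $\#\Inc_x=r$. For every pair $j,k\in\Inc_x$, the common face $\overline{R_j}\cap\overline{R_k}$ near $x$ is, on the one hand, the image under $X^{-1}$ of a smooth $(n-1)$-dimensional piece of $\Sigma^n_r$, and on the other hand is contained in the affine set $P_j\cap P_k$; matching dimensions, these must coincide, so all incident hyperplanes $P_j$, $j\in\Inc_x$, share one and the same $(n-1)$-dimensional affine subspace $L_x\ni x$. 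I then fix an orthonormal frame of the direction of $L_x$ and, for each $j\in\Inc_x$, the inward unit normal $\nu_j$ of $R_j$ in $P_j$ at $x$. The map $\im_x$ is obtained by gluing, for each $j$, the affine isometry $P_j\to P_{\cI_x(j)}'$ (with $P_{\cI_x(j)}'$ the hyperplane of $\cR^n_{r,\cI_x(j)}$) that sends $L_x$ to $\Sigma^n_r\equiv\R^{n-1}$ via the chosen frame, $x$ to $0$, and $\nu_j$ to the inward normal of $\cR^n_{r,\cI_x(j)}\equiv\R^n_{\ge 0}$. The pieces agree on $L_x\cap V_x$, so in the quotient defining $\cR^n_r$ we obtain a well-defined injection $\im_x:V_x\to\cR^n_r$ with $\im_x(x)=0$ and $\im_x(V_x\cap\Sigma)\subset\Sigma^n_r$.

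The delicate step is the geometric matching in the ramification case: establishing simultaneously that $\#\Inc_x=r(x)$ and that the finitely many incident affine hyperplanes $\{P_j\}_{j\in\Inc_x}$ meet along a single $(n-1)$-dimensional affine subspace through $x$. This is exactly where the two \emph{a priori} independent structures on a LEP space — the piecewise affine geometry from Definition~\ref{polramifiedspace} and the local model by $\cR^n_{r(x)}$ from Definition~\ref{LEP} — are forced to be compatible; once this compatibility is secured, the construction of $\im_x$ is a routine assembly of branch-by-branch affine isometries.
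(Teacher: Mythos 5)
The paper states Proposition~\ref{canonical} without any proof (it is immediately followed by Definition~\ref{derivativeLPbis}, which uses it), so there is no argument of the authors to compare yours against; your branch-by-branch assembly of affine isometries, hinged on showing that the incident hyperplanes $\{P_j\}_{j\in\Inc_x}$ all contain a common $(n-1)$-dimensional affine subspace $L_x$ through $x$, is the natural construction and its overall architecture is sound. You also correctly identify where the real content lies: forcing the flat polygonal structure of Definition~\ref{polramifiedspace} to be compatible with the local model by $\cR^n_{r(x)}$ coming from Definition~\ref{LEP}.

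The gap is that the step carrying that content is asserted rather than proved. You write that the defining chart $X$ ``restricts, on each $U\cap R_j$, to a diffeomorphism onto an open piece of one $\cR^n_{r,j'}\setminus\Sigma^n_r$,'' and that the common face near $x$ is ``the image under $X^{-1}$ of a smooth $(n-1)$-dimensional piece of $\Sigma^n_r$.'' Nothing in the definitions says that the homeomorphism $X$ respects the decomposition into branches and ramification set: a priori $X$ is only required to send the single point $x$ into $\Sigma^n_{r}$. To justify this you need that, for $r\ge 3$, $\Sigma^n_r$ is topologically intrinsic in $\cR^n_r$ (e.g.\ as the set of points admitting no neighborhood homeomorphic to an open subset of $\R^n$ --- an invariance-of-domain or local-homology argument), so that $X(U\cap N_R)=X(U)\cap\Sigma^n_r$; the branches minus $\Sigma^n_r$ are then recovered as the connected components of the complement, which gives both the bijection $\cI_x$ and the identity $\#\Inc_x=r(x)$. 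The same considerations are needed, and not supplied, to see that each $R_j$ lies locally on \emph{one} side of $L_x$ inside $P_j$ (otherwise a single $R_j$ would account for two model branches, $\nu_j$ would be ill-defined, and the glued map would fail to be injective), and your ``matching dimensions, these must coincide'' again invokes invariance of domain to conclude that a topological $(n-1)$-manifold sitting inside the affine $(n-1)$-plane $P_j\cap P_k$ is open in it. None of these points is false, but together they constitute the actual proof; as written, the proposal presupposes them.
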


We now consider derivatives  of a function on a LEP space at ramification points. For any $x\in \O$, we always fix a canonical identification chart $(V_x,\im_x)$ as defined
in proposition \ref{canonical} and all the concepts will be expressed in terms of the chart  $\im_x$ for sake of simplicity. However it is easy to verify that they in fact do not depend on the choice of specific chart.
\begin{definition}\label{derivativeLPbis} Let $\O$ be a LEP space with rampification set $\Sigma$.
Let $x\in\Sigma$, $r=r(x)$. Let  $V\subset\O$ be a neighborhood of $x$ and let $u:V\to\R$ be a function which is continuously differentiable at $x$.
Following the notation of definition \ref{derivativeLP}, we set
\begin{align}
 \pd_i u(x)&:=\pd_i(u\circ\im_x^{-1})(0),& i=1, \dots,n-1\nonumber\\
 \pd_{\nu_j}u(x)&:=\pd_{\nu_{\cI_x(j)}}(u\circ\im_x^{-1})(0),&j\in\Inc_x\nonumber\\
 \pd^j u(x)&:=(\pd_1 u(x),\dots,\pd_{n-1} u(x),\pd_{\nu_j}u(x))\label{cangradient}
\end{align}
where $\cI_x$ is defined as in proposition \ref{canonical}
(note that for each $j\in \Inc_x$ the collection $\{\pd_1,\dots\pd_{n-1},\pd_{\nu_j}\}$ form a basis of $T^j_x\O$).
 \end{definition}

 \setlength{\unitlength}{1cm}
\begin{picture}(14,7)
 \put(0,0){\framebox(12,7)}
\put(0,6.6){ {\bf Figure 2}}
\linethickness{0.8mm}
\put(6,.3){\line(0,1){4}}
\put(6.2,1.3){\LARGE $\Sigma$}
\linethickness{0.2mm}
\put(6,.3){\line(-1,0){5.5}}
\put(6,4.3){\line(-1,0){5.5}}
\put(.5,.3){\line(0,1){4}}
\put(.7,3.6){\LARGE $\Omega_1$}
\put(6,2.3){\vector(-1,0){1.6}}
\put(4.6,1.9){\Large{$\nu_1$}}
\put(6,4.3){\line(-1,1){2.5}}
\put(3.5,4.3){\line(0,1){2.5}}
\put(3.7,5.3){\LARGE $\Omega_2$}
\put(6,2.3){\line(-1,1){0.4}}
\put(5.4,2.9){\vector(-1,1){0.4}}
\put(5.35,3.05){\Large{$\nu_2$}}
\put(6,4.3){\line(3,2){3}}
\put(9,5.3){\line(0,1){1}}
\put(7.3,6.3){\LARGE $\Omega_3$}
\put(7.7,6.15){\vector(3,-2){.7}}
\put(6,2.3){\line(3,2){0.4}}
\put(6.9,2.9){\vector(3,2){0.4}}
\put(6.4,3.1){\Large{$\nu_3$}}
\put(6,.3){\line(3,1){5}}
\put(6,4.3){\line(3,1){5}}
\put(11,1.95){\line(0,1){4}}
\put(10.1,5.1){\LARGE $\Omega_4$}
\put(6,2.3){\vector(3,1){1.6}}
\put(7.1,2.3){\Large{$\nu_4$}}
\end{picture}

For any function $u:\O\to\R$ and each $j\in J$ we denote by $u^j:\tilde \O_j\rightarrow \R$ the restriction of $u$ to $\tilde \O_j$, i.e.
\[u^j(x):=u\circ \im_x^{-1}(0)\qquad \text{for $x\in\tilde\O_j$}\] We denote by $C(\O)$ the space of continuous function on $\O$. This in particular implies that $u^j\in C(\tilde \O_j)$ and
\[u^j(x)=u^k(x) \qquad\text{for any $x\in \Sigma$, $j,k\in Inc_x$.}\]
In a similar way we define the space of upper semi-continuous functions  $\text{USC}(\O)$ and  the space of
lower semi-continuous functions $\text{LSC}(\O)$, respectively. \par
In \cite{sc}, we introduced the concept of $(j,k)$-test function on a topological network $\G$, treating two edges $j$ and $k$  incident  at a  vertex $v_i$  as one connected
edge and imposing  that the derivatives in the direction of the incident edges,   taking into account their orientations,  coincide at $v_i$.
In other terms, a test function, considered as a function defined on $e_j\cup e_k$,  is differentiable at the interior point $v_i$.\par
Here we follow a similar idea for LEP spaces, linking the two normal derivatives of a test function  for a given couple of branch manifolds
incident at  a point $x\in\Sigma$.
\begin{definition}\label{testfunction}
Let $\phi\in C(\O)$, $x\in \Sigma$, $k,l\in\Inc_x$, $k\neq l$. Then $\phi$ is said to be $(k,l)$-differentiable at $x$ if $\phi$ is $C^1$-differentiable at $x$ and if we have
\begin{equation}\label{differentiability}
    \pd_{\nu_k}\phi(x)+\pd_{\nu_l}\phi(x)=0.
\end{equation}
\end{definition}
\begin{definition}
Let $u:\O\to \R$ and let $\phi$ be $C(\O)$.
\begin{itemize}
  \item Let $j\in J$ and let $x\in\O_j$. We say that $\phi$ is an upper (lower) test function of $u$ at $x$ if $\phi$ is $C^1$-differentiable at $x$ and $u-\phi$ attains a local maximum (minimum) at $x$.
 \item Let $x\in \Sigma$ and $k,l\in\Inc_x$, $k\neq l$.  We say that $\phi$ is a $(k,l)$ upper (lower) test function of $u$ at $x$ if $\phi$ is $(k,l)$-differentiable at $x$ and $u-\phi$ attains a local maximum (minimum) at $x$ with  respect to $\O_{k,l}:=\bar\O_k\cup\bar\O_l$.
\end{itemize}
\end{definition}
\section{Viscosity solutions}\label{section3}
Since now on, $\O$ and $\Sigma$ stand respectively for a LEP space and for its ramification set.
We introduce the class of Hamilton-Jacobi equations of eikonal type we consider in this paper.
An Hamiltonian $H=(H^j)_{j\in J}$ is a family of mappings
$H^j:\tilde\O_j\times T_x^j\O\to \R$ with $H^j\in C^0(\tilde \O_j\times \R^n)$ (recall: $\pd_R\O_j:=\pd\O_j\cap \Sigma$, $\tilde \O_j=\O_j\cup\pd_R\O_J$).
By means of the canonical identification map $ \im_x$ around a fixed point  $x\in\tilde\O_j$ we can think of $H^j$ as a mapping
$\td H^j:V\times\R^n\to \R$ defined by the identification
 \begin{equation}\label{Hcanonical}
 H^j(y ,D^j u(y))=\tilde H^j(\im_x(y),\pd^j u(y))\qquad \forall y\in V_x
 \end{equation}
 (see \eqref{gradient} and \eqref{cangradient})
 where $V$ is a neighborhood of $0\in\R^n$ or $0\in\R^n_{\ge 0}$ provided that $x\in\O_j$ or $x\in\Sigma$, respectively. In the sequel
we will speak of $H^j$ under the canonical  identification (around $x$) whenever we refer to $H^j$ in the sense of \eqref{Hcanonical}. We assume
 the Hamiltonian $H=(H^j)_{j\in J}$ fulfills the following properties
\begin{eqnarray}
   &&H^j \text{ is continuous on } \tilde\O_j\times \R;\label{H15}\\
   && H^j(x,p)\to +\infty \quad \text{as $|p|\to \infty$ for $x\in\tilde\O_j$;}\label{H3}\\
   &&\text{for each }x\in \Sigma,\, j\in \Inc_x,\,  p\mapsto H^j\left(x,(p_1,\dots, p_{n-1},\cdot\right) \text{ is not decreasing for $p\geq 0$};\label{H2}\\
  && \text{for each }x\in \Sigma,\, j,k\in \Inc_x,\, H^j(x,p)=H^k(x,p)\quad\forall p\in \R^n;\label{H4}\\
&& \text{for each }x\in \Sigma,\, j\in \Inc_x,\, H^j(x,(p',p_n))=H^j(x,(p',-p_n))\quad\forall p'\in \R^{n-1}, p_n\in \R.\label{H5}
\end{eqnarray}
\begin{remark}\label{comm_on_hyp}
Assumptions \eqref{H15}--\eqref{H3} are standard conditions in viscosity solution theory (see f.e.  \cite{i})
to ensure existence and uniqueness of the solution. Assumptions \eqref{H4} and \eqref{H5} represent compatibility conditions across
the ramification set; the former guarantees a continuity condition at $x\in\Sigma$ for the Hamiltonians
defined on two different branches while the latter states the invariance with respect to orientation of the inward normal $\nu_j$.
Under hypotheses~\eqref{H3} and~\eqref{H5}, assumption~\eqref{H2} is fulfilled provided that, for $x\in\Sigma$, $H^j(x,\cdot)$ is convex.
Observe that thanks to the identification \eqref{Hcanonical}, \eqref{H2}--\eqref{H5} induce corresponding properties
for the Hamiltonian $\td H=\{\td H^j\}_{j\in J}$.
\end{remark}
\begin{example}\label{eikonal}
A typical example of Hamiltonian satisfying the previous assumptions is given by the family
$H^j(x,p)=|p|^2-f^j(x)$ where the functions $f^j:\bar\O_j\to \R$ are continuous, non negative and satisfies
the compatibility condition $f^j(x)=f^k(x)$ if $x\in\pd_R\O_j\cap\pd_R\O_k$.
\end{example}
We  introduce the definition of viscosity solution for the Hamilton-Jacobi equation  of eikonal type
\begin{equation}\label{HJ}
    H(x,Du)=0,\qquad x\in \O.
\end{equation}
For $x\in\Sigma$, we define by  $\pi_j:T_x^j\O\to T_x\Sigma$ the projection on the tangent space of $\Sigma$, i.e.
\[
\pi_j(p):=\sum_{m=1}^{n-1}p_m\pd_m\qquad\text{for}\quad p=p_n\pd_{\nu_j}+\sum_{m=1}^{n-1}p_m \pd_m
\]
\begin{definition}\label{1:def3}\hfill\\
A function $u\in\text{USC}(\O)$ is called a (viscosity) subsolution of \eqref{HJ} in $\O$ if the following holds:
\begin{itemize}
  \item[i)] For any  $x\in \O_j$, $j\in J$, and for any  upper test function $\phi$ of $u$ at $x$  we have
 \[H^j(x, D^j \phi(x))\le 0.\]
  \item [ii)] For any $x\in \Sigma$, for any $j,k\in\Inc_x$ and for any $(j,k)$ upper test function $\phi$ of $u$ at $x$ we have
 \[H^j(x, D^j \phi(x))\le 0.\]
\end{itemize}
 A function $u\in\text{LSC}(\O)$ is called a (viscosity) supersolution of \eqref{HJ} in $\O$ if the following holds:
\begin{itemize}
  \item[i)] For any  $x\in \O_j$, $j\in J$,  and  for any  lower test function $\phi$ of $u$ at $x$  we have
 \[H^j(x, D^j \phi(x))\ge 0.\]
  \item [ii)] For any $x\in\Sigma$, $p'\in T_x\Sigma$ (see definition~\ref{tangentspace}) and  $j\in \Inc_x$, there exists $k\in \Inc_x$, $k\neq j$ such that
 \[H^j(x, D^j \phi(x))\ge 0\]
 for any $(j,k)$ lower test function $\phi$ of $u$ at $x$ satisfying   $\pi_j(D^j\phi(x))=p'$.
\end{itemize}
A continuous function $u\in C(\O)$ is called a (viscosity) solution of  \eqref{HJ} if it is both a viscosity subsolution and a viscosity supersolution.
\end{definition}
\begin{remark}\label{1:r2}
{\it i)}
Near $x\in\Sigma$ the set $\O_k\cup \O_j$ is locally diffeomorphic  to  $\R^n$  with a tangent space
given by $T_x^j\O\cup T_x^k\O$. The tangent space is composed by vectors $(p',p_n)$ with
$p'\in T_x\Sigma\subset T_x^j\O\cap T_x^k\O$ and $p_n\perp T_x\Sigma$. Taking into account the condition of differentiability \eqref{differentiability}, we see that if
$\phi$ is a  $(j,k)$-test function of $u$ at $x\in\Sigma$, then   by \eqref{H4}-\eqref{H5},
\[
H^j(x, D^j \phi(x))=H^k(x,D^k\phi(x)).
\]
In other terms, at a ramification point, the equation \eqref{HJ} can be equivalently replaced by one of the two couples $(H^j(x,p), T_x^j\Omega)$ and $(H^k(x,p),T_x^k\Omega)$.\\
{\it ii)}
The definitions of subsolution and supersolution are not symmetric at a ramification point. It turns out that solutions of eikonal equations are
distance-like functions from the boundary (see section \ref{section6}); this definition of supersolution reflects the idea these functions
 have to be solutions  of \eqref{HJ}. Since  there is always a shortest path from a ramification point to the boundary, for any branch
$\O_j$ and for any $x\in\Sigma\cap\O_j$ we can connect the $x$ to the boundary contained in the branch $\O_k$ by a shortest path. Then $(j,k)$-lower test functions satisfies the definition of supersolution.\\
It is also  worth to observe that, for $r(x)=2$, our definition of solution (resp., sub- or super-solution) coincides with the standard notion of viscosity solution (resp., sub- or super-solution).
\end{remark}
\begin{remark}\label{1:r2bis}
The  definition of viscosity solution is not affected by the fact that $x$ belongs or not to the ramification set $\Sigma$.
At ramification points, it requires a
suitable class of test functions (different from the classical one) with appropriate differentiability properties.
In this respect, our approach is different from the one in \cite{acct,bh,imz} which require to suitably redefine  the
Hamiltonian at ramification points.
\end{remark}
%
%

\section{A comparison result}\label{section4}
This section is devoted to the proof of a comparison theorem for \eqref{HJ}.
The coerciveness  of the Hamiltonian (stated in~\eqref{H3}) implies the  Lipschitz-continuity of   subsolutions; such a regularity which will be exploited in the comparison theorem.
\begin{lemma}\label{lip}
Let $K$ be a compact subset of $\O$ and let $u$ be a subsolution of \eqref{HJ}. Then there exists a constant $C_K$ depending only on $K$ such that
\[
|u(x)-u(y)|\le C_Kd(x,y).
\]
\end{lemma}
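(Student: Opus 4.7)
The plan is to reduce the statement, via the canonical charts of Proposition~\ref{canonical}, to the classical fact that coercive subsolutions on Euclidean domains are locally Lipschitz. Coercivity \eqref{H3}, continuity \eqref{H15}, and compactness of $K$ furnish a constant $R=R_K>0$ such that $\tilde H^j(\xi,p)>0$ for every $j$, every $\xi$ in the image under the canonical chart of a compact neighborhood of $K\cap\tilde\Omega_j$, and every $|p|\ge R$. I will show that $C_K=R$ works.

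For $x_0\in K\cap\Omega_j$ with $x_0\notin\Sigma$, the canonical chart $\im_{x_0}$ identifies a neighborhood $V_{x_0}\subset\Omega_j$ with an open subset of $\R^n$ on which $\tilde u:=u\circ\im_{x_0}^{-1}$ is a classical USC viscosity subsolution of $\tilde H^j(\xi,D\tilde u)=0$. The standard coercivity-implies-Lipschitz argument (testing $\tilde u$ against $\xi\mapsto \tilde u(\xi_0)+L|\xi-\xi_0|+\varepsilon|\xi-\xi_0|^2$ from above for any $L>R$, then letting $\varepsilon\to 0$ and $L\to R$) yields $|\tilde u(\xi)-\tilde u(\eta)|\le R|\xi-\eta|$ on every compact subset of $V_{x_0}$.

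For $x_0\in K\cap\Sigma$ the key observation is that for any pair $j,k\in\Inc_{x_0}$ the set $\bar\Omega_j\cup\bar\Omega_k$ is, locally at $x_0$, diffeomorphic to an open ball of $\R^n$. Indeed $\im_{x_0}$ sends $\bar\Omega_j\cup\bar\Omega_k$ to two copies of $\R^n_{\ge 0}$ glued along $\R^{n-1}$, and reversing the sign of the normal coordinate on the $k$-branch turns this into an open subset of $\R^n$. Under this flattening, a $(j,k)$-test function at $x_0$ becomes exactly a $C^1$ function on the model $\R^n$: the matching relation $\pd_{\nu_j}\phi(x_0)+\pd_{\nu_k}\phi(x_0)=0$ of Definition~\ref{testfunction} is precisely the derivative-matching condition across $\R^{n-1}$ after the orientation flip. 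By \eqref{H4} and \eqref{H5}, the functions $\tilde H^j$ and $\tilde H^k$ assemble into a single continuous Hamiltonian $\tilde H$ on the flat model, and the viscosity subsolution clauses of Definition~\ref{1:def3} for $u$ on $\bar\Omega_j\cup\bar\Omega_k$ near $x_0$ read verbatim as the classical subsolution condition for $\tilde u$ against $\tilde H$. The standard argument above then yields the Lipschitz estimate with constant $R$ on every compact subset of $V_{x_0}\cap(\bar\Omega_j\cup\bar\Omega_k)$, for each pair $j,k\in\Inc_{x_0}$.

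To globalize, I cover $K$ by finitely many neighborhoods $V_{x_0}$ of the two types above (taking, at ramification points, the union over the finitely many pairs $j,k\in\Inc_{x_0}$) and extract a Lebesgue number $\delta>0$ so that any two points of $K$ at distance less than $\delta$ lie together in one of the local pieces on which the Lipschitz bound with constant $R$ has already been proved. For an arbitrary pair $x,y\in K$, connect them by a path in $\Omega$ of length less than $d(x,y)+\varepsilon$, subdivide it into segments of length less than $\delta$, apply the local bound on each segment, sum, and let $\varepsilon\to 0$. The main obstacle is the reduction at ramification points: one must verify that Definition~\ref{1:def3}(i)--(ii), restricted to $\bar\Omega_j\cup\bar\Omega_k$, coincides with the classical subsolution condition on the flat model and that, conversely, every classical smooth test function there pulls back, via the orientation flip, to an admissible $(j,k)$-test function on $\bar\Omega_j\cup\bar\Omega_k$. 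Hypothesis~\eqref{H4} is exactly what lets $H^j$ and $H^k$ glue into one Hamiltonian, and \eqref{H5} is exactly what makes that Hamiltonian invariant under the sign flip that realizes the flat gluing; without both, the reduction fails.
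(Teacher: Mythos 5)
Your proof is correct, and it is worth noting that the paper does not actually argue this lemma at all: it simply refers to \cite[Prop.~II.4.1]{bcd} for the ``standard'' coercivity-implies-Lipschitz argument. What your write-up supplies, and what the citation does not cover, is precisely the reduction of the ramified setting to the Euclidean one, namely the observation that near $x_0\in\Sigma$ the union $\bar\O_j\cup\bar\O_k$ flattens to an open subset of $\R^n$, that the orientation flip turns the matching condition $\pd_{\nu_j}\phi+\pd_{\nu_k}\phi=0$ of Definition~\ref{testfunction} into ordinary differentiability across the interface, and that \eqref{H4}--\eqref{H5} are exactly what glue $\td H^j$ and $\td H^k$ into one continuous Hamiltonian on the flat model. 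This is the same mechanism the paper itself invokes in Remark~\ref{1:r2} and again inside the proof of Theorem~\ref{Comparison}, so your route is the intended one, just made explicit. Two small caveats, neither a real gap: the identification of the local Lipschitz constant with the coercivity radius $R$ uses that for a LEP space the canonical charts of Proposition~\ref{canonical} are isometries of the flat branches (true here, but worth saying); and in the globalization step the near-minimizing path between two points of $K$ need not stay in $K$, so one should either restrict to $d(x,y)$ small (handling distant pairs by the oscillation of $u$ over $K$, which is finite once local Lipschitz continuity is known) or run the covering argument on a slightly larger compact set --- the lemma only asserts existence of some $C_K$, so this costs nothing.
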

The proof is standard in viscosity theory and we refer the reader to~\cite[Prop.II.4.1]{bcd}.

%
\begin{theorem}\label{Comparison}
  Let $v\in LSC(\bar \O)$ and $u\in USC(\bar \O) $ be respectively a supersolution to  \eqref{HJ} and a subsolution of
\begin{equation}\label{HJsbis}
    H(x,D u)=f(x),\qquad x\in\O,
\end{equation}
with $f\in C(\O)$ and  $f(x)<0$ for all $x\in\O$. If $u\le v$ on $\partial\O$, then $u\le v$ in $\bar \O$.
\end{theorem}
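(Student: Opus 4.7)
The plan is to argue by contradiction and to adapt the classical Crandall--Ishii--Lions doubling-of-variables technique to the ramified structure, working locally in a pair of incident branches whenever the maximum is attained at a ramification point. Suppose, for contradiction, that $M:=\max_{\bar\O}(u-v)>0$. Since $u-v\in\text{USC}(\bar\O)$ and $u\le v$ on $\partial\O$, the maximum is attained at some $\bar x$ in the interior of $\bar\O$. Lemma~\ref{lip} provides the Lipschitz control on $u$ needed to locate penalized maximizers. I then split into two cases depending on whether $\bar x$ is a ramification point.

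If $\bar x\in\O_j\setminus\Sigma$ for some $j\in J$, a neighborhood of $\bar x$ is Euclidean and the usual Ishii-type doubling argument applied to $\Phi_\e(x,y)=u(x)-v(y)-|x-y|^2/(2\e)$ leads, via the coercivity~\eqref{H3} and $f(\bar x)<0$, to the inequalities $H^j(\bar x,p)\le f(\bar x)<0$ and $H^j(\bar x,p)\ge 0$ for a common limiting gradient $p$, a contradiction.

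The substantial case is $\bar x\in\Sigma$. Fix any $j\in\Inc_{\bar x}$ and, setting $p'=0\in T_{\bar x}\Sigma$ in the supersolution definition, obtain some $k\in\Inc_{\bar x}\setminus\{j\}$ such that the supersolution inequality at $\bar x$ is available for every $(j,k)$-lower test function $\phi$ with $\pi_j(D^j\phi(\bar x))=0$. By Proposition~\ref{canonical}, the canonical chart $\im_{\bar x}$ identifies a neighborhood of $\bar x$ with an open subset of $\cR^n_{r(\bar x)}$; restricting to $\O_{j,k}=\bar\O_j\cup\bar\O_k$, the two half-spaces $\cR^n_{r,j}$ and $\cR^n_{r,k}$ glue along $\Sigma^n_{r(\bar x)}\cong\R^{n-1}$, and I would regard this glued patch as an open subset of $\R^n$ with coordinates $(y_1,y')\in\R\times\R^{n-1}$ where $y_1\ge 0$ corresponds to the $j$-branch and $y_1\le 0$ to the $k$-branch.

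In this flat chart, perform doubling of variables on $\O_{j,k}$: let $(x_\e,y_\e)$ maximize
\[
\Phi_\e(x,y)=u(x)-v(y)-\tfrac{1}{2\e}\bigl|\im_{\bar x}(x)-\im_{\bar x}(y)\bigr|^2
\]
on a small closed neighborhood of $(\bar x,\bar x)$ in $\O_{j,k}\times\O_{j,k}$; standard estimates give $x_\e,y_\e\to\bar x$ and $|\im_{\bar x}(x_\e)-\im_{\bar x}(y_\e)|^2/\e\to 0$. The key observation is that the penalty, being smooth in the glued $\R^n$ chart, automatically satisfies the compatibility~\eqref{differentiability} at every point of $\Sigma\cap V_{\bar x}$ and hence provides an admissible $(j,k)$-test function regardless of whether $x_\e$ or $y_\e$ lies on $\Sigma$. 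Applying the subsolution condition to $u$ at $x_\e$ with $x\mapsto v(y_\e)+|\im_{\bar x}(x)-\im_{\bar x}(y_\e)|^2/(2\e)$, and the supersolution condition to $v$ at $y_\e$ with $y\mapsto u(x_\e)-|\im_{\bar x}(x_\e)-\im_{\bar x}(y)|^2/(2\e)$, one obtains $H^{j_\e}(x_\e,p_\e)\le f(x_\e)<0$ and $H^{i_\e}(y_\e,p_\e)\ge 0$, where $p_\e$ is the common penalty gradient and $j_\e,i_\e\in\{j,k\}$ identify the branches containing $x_\e$, respectively $y_\e$. By coercivity~\eqref{H3}, $|p_\e|$ stays bounded; passing to a subsequence $p_\e\to p$ and using the continuity~\eqref{H15} together with the compatibility~\eqref{H4}--\eqref{H5} to reconcile $j_\e$ and $i_\e$ at $\bar x$, both inequalities survive in the limit, yielding $H^j(\bar x,p)\le f(\bar x)<0$ and $H^j(\bar x,p)\ge 0$, the desired contradiction.

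The main obstacle I anticipate is this: if $y_\e$ drifts to a point of $\Sigma\cap V_{\bar x}$ other than $\bar x$, the supersolution definition only guarantees \emph{some} partner branch $k'(y_\e)$, which might not coincide with the $k$ selected at $\bar x$, so the pairing used in the doubling could in principle break. The resolution relies on Remark~\ref{1:r2}\emph{(i)}: in the glued chart the Hamiltonian is unambiguous across $\Sigma$ thanks to~\eqref{H4}--\eqref{H5}, so any admissible partner $k'(y_\e)$ produces the same inequality, and the choice $p'=0$ at $\bar x$ ensures that the tangential compatibility is maintained in the limit.
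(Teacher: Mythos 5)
Your overall skeleton --- contradiction, doubling of variables, case split according to the position of the penalized maximizers relative to $\Sigma$ --- matches the paper's, but the way you localize the doubling creates a genuine gap. The paper penalizes with the geodesic distance, $\Phi_\e(x,y)=u(x)-v(y)-\e^{-1}d(x,y)^2$, maximized over \emph{all} of $\bar\O\times\bar\O$; the resulting penalty $\phi_{x_\e}(\cdot)=\e^{-1}d(x_\e,\cdot)^2$ satisfies $\pd_{\nu_k}\phi_{x_\e}(\tilde y_\e)=-\pd_{\nu_j}\phi_{x_\e}(\tilde y_\e)$ for \emph{every} $k\in\Inc_{y_\e}$, so it is an admissible $(j,k)$-lower test function for every pair, and the minimum of $v+\phi_{x_\e}$ at $y_\e$ is a minimum relative to every $\O_{j,k}$. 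This is precisely what absorbs the existential quantifier in part (ii) of the supersolution definition: whichever partner branch the definition hands you at $y_\e$, the test function is admissible for that pair. You instead pre-select a single pair $(j,k)$ by applying the supersolution definition at the \emph{limit} point $\bar x$ with $p'=0$, and then run the doubling only on $\O_{j,k}\times\O_{j,k}$. Two things go wrong. First, the supersolution inequality must be invoked at $y_\e$, not at $\bar x$; if $y_\e\in\Sigma$ the definition at $y_\e$ only guarantees \emph{some} partner $k'\in\Inc_{y_\e}\setminus\{j\}$, possibly different from your $k$, and since your penalized minimum is only a minimum relative to $\O_{j,k}$, your test function is not an admissible $(j,k')$-lower test function when $k'\neq k$. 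Your appeal to \eqref{H4}--\eqref{H5} and Remark~\ref{1:r2} fixes the \emph{value} of the Hamiltonian across branches, but not the \emph{admissibility} of the test function, which is a statement about the set on which $v-\phi$ attains its minimum. Second, the choice $p'=0$ is unjustified: the supersolution definition ties the partner branch to the actual tangential gradient $\pi_j(D^j\phi(y_\e))$ of the test function, and the tangential component of the penalty gradient is merely bounded (via Lemma~\ref{lip}), not zero.

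Both defects disappear if you double globally with the geodesic distance as the paper does, and distinguish instead the cases where $x_\e$ and $y_\e$ lie in the same branch or in two distinct branches; in the latter case the geodesic passes through a point $z\in\Sigma$, the identities \eqref{5:5}--\eqref{5:6} hold by construction of the penalty, and \eqref{H4}--\eqref{H5} reconcile $H^j$ and $H^k$ at $z$, which is where the compatibility hypotheses actually enter.
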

\begin{proof}
Assume by contradiction that there exists $z\in\O$ such that
\begin{equation}\label{5:1}
    u(z)-v(z)=\max_{\bar\O}\{u-v\}=\delta>0.
\end{equation}
For $\e>0$ define $\Phi_\e:\bar\O\times\bar\O\to \R$ by
\[\Phi_\e(x,y):=u(x)-v(y)- \e^{-1}d(x,y)^2\]
where $d(x,y)$ is the geodesic distance between two points $x$ and $y$ on the space $\bar \O$.
Since $\Phi_\e$ is an  upper semi-continuous function, there exists a maximum point $(x_\e,y_\e)$ for $\Phi_\e$ in $\bar\O^2$. By
$\Phi_\e(z,z)\le \Phi_\e(x_\e,y_\e)$ we get
 \begin{equation}\label{5:2}
 \e^{-1}d(x_\e,y_\e)^2\le u(x_\e)-v(y_\e)-\delta\le M,
 \end{equation}
for some $M\in\R$. Hence
\begin{equation}\label{5:3}
    \lim_{\e\to 0} d(x_\e,y_\e)=0.
\end{equation}
It follows that there exists $x_0\in\bar \O$ such that $x_\e,y_\e \to x_0$.
Owing to \eqref{5:2}, there holds: $\d\leq u(x_\e)-v(y_\e)$; passing to the limit, we infer $0<\d\leq u(x_0)-v(x_0)$ and, in particular, $x_0\notin \partial \O$. Whence, for $\e$ sufficiently small, we get: $x_\e, y_\e,x_0\in\O$.
 By \eqref{5:2} and the Lipschitz
continuity of $u$ (see lemma \ref{lip}) we get
\begin{equation*}
 \e^{-1}d(x_\e,y_\e)^2\le u(x_\e)-u(y_\e)+u(y_\e)-v(y_\e)-\delta\le Ld(x_\e, y_\e)
 \end{equation*}
and  therefore
\begin{equation}\label{5:4}
\lim_{\e \to 0^+}\e^{-1}d(x_\e,y_\e)=0.
\end{equation}
Since $x_0\in \O$, there exists $r>0$ such that
\begin{equation}\label{5:4bis}
B_r(x_0)\cap \pd\O=\emptyset
\end{equation}
and for $\e$ sufficiently small,
$x_\e, y_\e \in B_r(x_0)$.
Since $\O$ is compact and composed by a finite number of branch $\O_j$, we have
\[\inf_{j\in J}\inf_{k:\bar\O_j\cap\bar\O_k=\emptyset}d(\O_j,\O_k)>0.\]
Hence, for $\e$ sufficiently small, we can assume that if $x_\e\in \O_{j_{\e}}$ and $y_\e\in \O_{k_{\e}}$, then
$\bar\O_{j_{\e}}\cap\bar\O_{k_{\e}}\neq \emptyset$.

>From now on we set $x:=x_\e$, $y:=y_\e$ and we define
 $\phi_x(\cdot):=\e^{-1}d(x,\cdot)^2$ and $\phi_y(\cdot):=\e^{-1}d(\cdot,y)^2$ and we work with the canonical identification
 \eqref{Hcanonical}.
  \par
\emph{Case 1. $x, y\in \td\O_j$ for some $j\in J$:} Consider
 $\im_{x_0}:V_{x_0}\to \im_{x_0}(V_{x_0})$ and    set
\[\tilde x=\im_{x_0}(x),\quad \tilde y=\im_{x_0}(y).\]
If $x \in\O_j$, then $\phi_x(\cdot)$ is differentiable at $ y$, and
\begin{align*}
    \pd^j \phi_x(\tilde y)=\e^{-1}d(x,y)\frac{\tilde y-\tilde x}{|\tilde y-\tilde x|}.
\end{align*}
If  $x\in\Sigma$, then  we have
\begin{align*}
 &\pd^j \phi_x(\tilde y)=\e^{-1}d(x,y)\frac{\tilde y-\tilde x}{|\tilde y-\tilde x|}\\
 &(\pd_1 \phi_x(\tilde y),\dots,\pd_{n-1}\phi_x(\tilde y))\in T_x\Sigma_{r(x)}^n\\
 &\pd_{\nu_k} \phi_x(\tilde y)=-\pd_{\nu_j} \phi_x(\tilde y)\quad\text{for any  $k\in \Inc_x$, $k\neq j$,}
\end{align*}
We conclude that for any  $k\in \Inc_x$, $k\neq j$,  $\phi_x(\cdot)$ is $(j,k)$-differentiable at $y$.
A corresponding property holds for $\phi_y(\cdot)$.\par
Since   $u-\phi_y$ has a maximum point at $x$ and $v+\phi_x$ has a minimum point at $y$, we get
\begin{align*}
    &H^j( x,D^j\phi_y(x))\le  f( x) \\
    &H^j(y,-D^j\phi_x(y))\ge 0.
\end{align*}
Moreover
\[H^j(y,-D^j\phi_x(y))=\td H^j(\tilde y,\e^{-1}d(x,y)\frac{\tilde y-\tilde x}{|\tilde y-\tilde x|} )=H^j(y,D^j\phi_y(x)) .\]
Fix $R>0$ and denote by $\omega_j$,   the modulus of continuity of $H^j$ with respect to $(x,p)\in\bar \O_j\times [-R,R]^n$.
Hence, for sufficiently small $\e>0$ we have
\begin{align*}
  \eta\le -f( x)\le   H^j\left (y,D^j\phi_y(x) \right) -H^j\left(  x, D^j\phi_y(x)\right)
\le \omega_j(d(x,y))
\end{align*}
for some $\eta>0$. By \eqref{5:4} we get a contradiction for $\e\to 0$.

\emph{Case 2.  $x\in   \O_j$, $y\in  \O_k$  for some $j,k \in J$,  $j\neq k$:}
Consider
 $\im_{x_0}:V_{x_0}\to \im_{x_0}(V_{x_0})$,    set $r=r(x_0)$ (see def. \ref{derivativeLPbis}) and
\[\tilde x=\im_{x_0}(x)\in  \cR^n_{r,\cI_{x_0}(j)},\quad \tilde y=\im_{x_0}(y)\in \cR^n_{r,\cI_{x_0}(k)}.\]
Since $x\in   \O_j$, $y\in  \O_k$, by   \eqref{5:4bis} there exists $z\in \td\O_j\cap\td\O_k\subset \Sigma$ such that
$d(x,y)=d(x,z)+d(z,y)$ and, defined $\td z=\im_{x_0}(z)\in\Sigma^n_r$, we have
    \begin{align*}
    &\pd^k \phi_x(\tilde y)=\e^{-1}d(x,y)\frac{\tilde y-\tilde z}{|\tilde y-\tilde z|}\\
    &\pd^j \phi_y(\tilde x)=\e^{-1}d(x,y)\frac{\tilde x-\tilde z}{|\tilde x-\tilde z|}.
    \end{align*}
Moreover
\begin{align}
     &\pd_i \phi_x(\tilde y)=-\pd_i \phi_y(\tilde x) \qquad i=1,\dots,n-1\label{5:5}\\
    &\pd_{\nu_k} \phi_x(\tilde y)=\pd_{\nu_j} \phi_y(\tilde x)\label{5:6}
\end{align}
taking into account the definition of $\nu_j$, $\nu_k$.
As in Case 1,  $\phi_x$ and $\phi_y$ are $(j,k)$-differentiable  and
$u-\phi_y$ has a maximum point at $x$ and $v+\phi_x$ has a minimum point at $y$. Therefore
\begin{align*}
    &H^j( x,D^j\phi_y(x))\le f( x) \\
   &H^k(y,-D^k\phi_x(y))\ge 0.
\end{align*}
Fix $R>0$ and denote by $\omega$,   the modulus of continuity of every $H^l$ with respect to $(x,p)\in\bar \O_l\times [-R,R]^n$.
For sufficiently small $\e>0$ we have
\begin{align*}
  \eta&\le -f( x)\le H^k(y,-D^k\phi_x(y))- H^j( x,D^j\phi_y(x))\le H^k(y,-D^k\phi_x(y))-H^k(z,-D^k\phi_x(y))\\
  &H^j(z,D^j\phi_y(x))-H^j(x,D^j\phi_y(x))+ H^k(z,-D^k\phi_x(y))- H^j( z,D^j\phi_y(x))\\
  &\le\omega(d(y,z))+\omega(d(x,z))-H^j(z,D^j\phi_y(x))+ H^k(z,-D^k\phi_x(y))
\end{align*}
By \eqref{H4}, \eqref{H5},\eqref{5:5} and  \eqref{5:6} we get
\begin{align*}
H^k(z,-D^k\phi_x(y))&= \td H^k\big(\td z, (-\pd_1  \phi_x(\tilde y),\dots,-\pd_{n-1} \phi_x(\tilde y), -\pd_{\nu_k}\phi_x(\tilde y)\big)\\
&= \td H^j\big(\td z, (\pd_1  \phi_y(\tilde x),\dots,\pd_{n-1} \phi_y(\tilde x), -\pd_{\nu_j}\phi_y(\tilde x)\big)\\
&= \td H^j\big(\td z, (\pd_1  \phi_y(\tilde x),\dots,\pd_{n-1} \phi_y(\tilde x), \pd_{\nu_j}\phi_y(\tilde x)\big)=H^j( z,D^j\phi_y(x))
\end{align*}
Hence
\[\eta \le -f( x)\le \omega(d(y,z))+\omega(d(x,z))\]
which gives a contradiction for $\e\to 0^+$ since by \eqref{5:3}, $  x,  y\to  x_0$ for $\e\to 0$.
\end{proof}
\begin{remark}
For $H=(h^j(p)-f^j(x))_{j\in J}$, with $h^j$ positively homogeneous, the comparison principle can be also obtained by means of Kru\v zkov transform (see \cite{bcd}).
\end{remark}
\section{Existence}\label{section5}
In this section we prove existence of a solution via   a Perron's method.
We define
$u^*,u_*:\O\to [-\infty,\infty]$ by
\[u^*(x)={\lim\sup}_{r\to 0}\{u(y):\,d(x,y)\le r\}\quad\text{and}\quad u_*(x)={\lim\inf}_{r\to 0}\{u(y):\,d(x,y)\le r\}. \]
\begin{lemma}\label{Perronsub}
Let $\cV$ be an arbitrary set of viscosity subsolutions of \eqref{HJ}. Define  $u(x):=\sup_{v\in \cV} v(x)$ for all $x\in\O$ and assume that
$u^*(x)<\infty$ in $\Omega$. Then $u^*$ is a viscosity subsolution  of \eqref{HJ}.
\end{lemma}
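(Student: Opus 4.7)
The plan is to adapt the classical Perron argument to verify that the USC envelope $u^*$ satisfies the subsolution conditions of Definition \ref{1:def3} at every $x_0\in\O$. The verification splits according to whether $x_0$ lies in the interior of some branch $\O_j$ or on the ramification set $\Sigma$; in each case I would fix an upper test function $\phi$ for $u^*$ at $x_0$ of the corresponding kind.

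The technical core is to replace $\phi$ by a surrogate $\td\phi$ enjoying three properties: (a) $\td\phi$ agrees with $\phi$ to first order at $x_0$; (b) $u^*-\td\phi$ still has a strict maximum at $x_0$ on an appropriate closed set $K$; (c) in the ramification case, $\td\phi$ is $(j,k)$-differentiable throughout a neighbourhood of $x_0$ in $\Sigma\cap\O_{j,k}$. To construct $\td\phi$ in the ramification case I would work in the local chart that flattens $\bar\O_j\cup\bar\O_k$ to an open subset of $\R^n$ (gluing the two half-spaces of $\cR^n_{r(x_0)}$ with opposite orientations across $\Sigma^n_{r(x_0)}$) and take, in these coordinates,
\[
\td\phi(x) := \phi(x_0) + D\phi(x_0)\cdot(x-x_0)+ C|x-x_0|^2,
\]
with $C>0$ large enough that $u^*-\td\phi$ attains a strict maximum at $x_0$ on $K:=\bar B_r(x_0)\cap\O_{j,k}$. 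By construction $\td\phi$ is $C^1$ on $\R^n$ in the flattened coordinates, hence $(j,k)$-differentiable at every point of $K\cap\Sigma$, with $D^j\td\phi(x_0)=D^j\phi(x_0)$. In the interior case, the easier choice $\td\phi:=\phi+C\,d(\cdot,x_0)^2$ on $K:=\bar B_r(x_0)\subset\O_j$ suffices.

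With $\td\phi$ and $K$ in place, by definition of $u^*$ I pick $x_n\to x_0$ with $u(x_n)\to u^*(x_0)$, then $v_n\in\cV$ with $v_n(x_n)\ge u(x_n)-1/n$, so that $v_n^*(x_n)\to u^*(x_0)$. Let $y_n$ be a maximiser of the USC function $v_n^*-\td\phi$ on $K$; the strict-maximum property together with $v_n^*\le u^*$ forces $y_n\to x_0$ and $v_n^*(y_n)\to u^*(x_0)$. I then apply the subsolution property of $v_n$ at $y_n$: in the interior case, or in the ramification subcases $y_n\in\O_j$ and $y_n\in\Sigma\cap\O_{j,k}$, Definition \ref{1:def3}(i)--(ii) (the latter with pair $(j,k)$, whose validity at $y_n$ is exactly property (c) of $\td\phi$) gives $H^j(y_n,D^j\td\phi(y_n))\le 0$; in the subcase $y_n\in\O_k$ it gives $H^k(y_n,D^k\td\phi(y_n))\le 0$. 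Passing to the limit using continuity of the Hamiltonians, $D^j\td\phi(y_n)\to D^j\phi(x_0)$, and Remark \ref{1:r2}(i) (which equates $H^j(x_0,D^j\phi(x_0))$ and $H^k(x_0,D^k\phi(x_0))$ because $\phi$ is $(j,k)$-differentiable at $x_0$), yields $H^j(x_0,D^j\phi(x_0))\le 0$, as required.

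The main obstacle is the subcase $y_n\in\Sigma\cap\O_{j,k}$: Definition \ref{1:def3}(ii) demands a \emph{bona fide} $(j,k)$-test function at the point $y_n$, whereas the original $\phi$ is only $(j,k)$-differentiable at the prescribed point $x_0$ and not at nearby points of $\Sigma$. The substitution $\phi\mapsto\td\phi$ is designed precisely to clear this obstacle; once it is carried out, the maximiser $y_n$ of $v_n^*-\td\phi$ automatically has access to the ramification inequality, and the rest of the argument is a routine passage to the limit.
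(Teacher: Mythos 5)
Your overall strategy is the paper's: perturb the test function to make the maximum strict, extract maximizers $y_n$ of $v_n-(\text{perturbed }\phi)$ on $\bar B_\eta(x_0)\cap(\bar\O_j\cup\bar\O_k)$, show $y_n\to x_0$, split into the cases $y_n\notin\Sigma$ and $y_n\in\Sigma$, and pass to the limit using \eqref{H15}, \eqref{H4}, \eqref{H5}. You also correctly isolate the real difficulty, namely that Definition \ref{1:def3}(ii) must be invoked at the nearby points $y_n\in\Sigma$, where the original $\phi$ is not known to be $(j,k)$-differentiable.

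However, your device for clearing that obstacle has a genuine gap: step (b) fails. Test functions in this paper are only $C^1$, so the first-order Taylor remainder $R(x):=\phi(x)-\phi(x_0)-D\phi(x_0)\cdot(x-x_0)$ is merely $o(|x-x_0|)$ and need not be dominated by $C|x-x_0|^2$ for any finite $C$. Concretely, in the flattened chart take $u^*(x)=\phi(x)=|x-x_0|^{3/2}$ (a legitimate $C^1$ upper test function, with $u^*-\phi\equiv 0$ attaining a maximum at $x_0$); then $u^*-\td\phi=|x-x_0|^{3/2}-C|x-x_0|^2$ is strictly \emph{positive} near $x_0$, so $x_0$ is not even a local maximum of $u^*-\td\phi$, the maximizers $y_n$ are pushed to $\partial K$, and the argument collapses. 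Replacing $\phi$ by its linearization plus a quadratic is only legitimate for $C^2$ test functions (or after a separate reduction to smooth test functions, which you do not carry out and which is itself nontrivial on a ramified space). The paper avoids this by \emph{keeping} $\phi$ and adding the penalization $\d\, d_{x_0}(\cdot)^2$, which preserves and strictifies the maximum for free; the $(j,k)$-differentiability of the correction term along $\Sigma\cap\bar B$ is then obtained not from linearity but from the LEP structure, namely that $\Sigma$ is locally contained in an $(n-1)$-dimensional affine subspace, so that $\pd_{\nu_j}d_{x_0}\equiv 0$ on $\Sigma\cap\bar B$ for every $j\in\Inc_{x_0}$. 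You should adopt that perturbation instead of the Taylor surrogate; the rest of your argument (choice of $x_n$, $v_n$, $y_n$, the two cases, and the limit passage via Remark \ref{1:r2}) then goes through as in the paper.
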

\begin{proof}
We assume that $x\in \Sigma$, otherwise we can use standard arguments in viscosity solution theory. We consider a $(j,k)$-upper test function $\phi$ at $x$ and we have to show that
\begin{equation}\label{E:1}
    H^j(x,D^j\phi(x))\le 0.
\end{equation}
Let $\eta>0$ be such that $u^*-\phi$ has a local maximum point at $x$ in $\bar B=B_\eta(x)\cap(\bar\O_j\cup\bar\O_k)$ where
$B_\eta(x)=\{y\in\O:\, d(x,y)\le \eta\}$.
Since $\O$ is a LEP space we can assume that $\eta>0$ is sufficiently small such that $B_\eta(x)\cap\Sigma$ is contained in a $(n-1)$-dimensional affine linear subspace of $\R^{n+1}$. We consider the function $\phi_\d(y)=\phi(y)+\delta d_x(y)^2$, for   $\d>0$, where $d_x(\cdot)=d(x,\cdot)$. Then $u^*-\phi_\d$ has a strict maximum point at $x$.\\
Let $\eta_n\to 0$
 for $n\to \infty$ and $x_n\in B_n:=B_{\eta_n}(x)$ such that $\sup_{B_n} u -u(x_n)\le 1/n$. By the definition of $u$, there is $u_n\in \cV$ such that $u(x_n)-u_n(x_n)\le 1/n$. Hence
\begin{equation}\label{E:2}
\sup_{B_n}u\ge u_n(x_n)-\frac{1}{ n}>\sup_{B_n}u-\frac{2}{ n}.
\end{equation}
and
\begin{equation}\label{E:3}
   u^*(x)=\lim_n\, \sup_{B_n} u=\lim_n u_n(x_n).
\end{equation}
Let $y_n$ be  a maximum point for $u_n-\phi_\d$ in $\bar B$. Up to a subsequence, we can assume that $y_n\to z\in \bar B$.
By  lemma \ref{lip} there exists a constant $C=C_{\bar B}$ such that
\begin{align*}
     u(y_n)-\phi_\d(y_n)&\ge u_n(y_n)-\phi_\d(y_n)\ge u_n(x)-\phi_\d(x)\ge u_n(x_n)-\phi_\d(x_n)-C|x-x_n|
\end{align*}
Taking the $\limsup$  for $n\to \infty$ in the previous inequality, by \eqref{E:3} and the definition of $\lim\sup^*$ we get
\begin{equation}\label{E:4}
u^*(z)-\phi_\d(z)\ge     u^*(x)-\phi_\d(x)
\end{equation}
and therefore $x=z$ since $u^*-\phi_\d$ has a strict maximum point at $x$. Moreover
\begin{equation}\label{E:4bis}
\lim y_n=x\quad \text{and}\quad   \lim_n u_n(y_n)=u^*(x).
\end{equation}
We distinguish two cases.\\
\emph{Case 1.}
There are infinitely many $y_n$ with $y_n\not\in\Sigma$. Then there exists a subsequence -also denoted with $\{y_n\}_n$- which is completely contained in either $\O_k$ or $\O_j$. Without restriction we assume that $y_n\in\O_k$ for all $n$. Since $\phi_\d$ is an upper test function
for  $u_n$ at $y_n$ and $\phi_\d$ is differentiable at $y_n$, we get
\begin{equation}\label{E:5}
    H^k(y_n, D^k\phi_\d(y_n))=\td H^k\left(\td y_n, \pd^k\phi(\td y_n)+ 2d(x,y_n)\frac{\td y_n}{|\td y_n|}\right)\le 0
\end{equation}
where $\td y_n=\im_x(y_n)$ (recall  that $\im_x$ is the canonical identification around $x$ with $\im_x(x)=0$).\\
\emph{Case 2.} By possible truncating the sequence $\{y_n\}_n$ we can assume that $y_n\in \Sigma\cap \bar B$ for any $n$. Since $\Sigma$
is contained in a $(n-1)$-dimensional  linear subspace of $\R^{n+1}$, we have
\[\pd_{\nu_j}d_x(y)=0 \quad \forall y\in \Sigma\cap \bar B,\,j\in \Inc_x.\]
In particular $d_x$ is $(j,k)$-differentiable in $\Sigma\cap \bar B$ and therefore
$\phi_\d$  is  $(j,k)$-differentiable  in $\Sigma\cap B$
with $\pd_m\phi_\d(0)=\pd_m\phi(0)$ for either $m=j$ or $m=k$. Since $\phi_\d$ is a $(j,k)$-upper test function
for $u_n$ at $x$ we get
\begin{equation}\label{E:6}
   H^m(y_n, D^m\phi_\d(y_n))=\td H^m(\td y_n, \pd^m\phi(\td y_n))\le 0.
\end{equation}
for  $\td y_n=\im_x(y_n)$.
\par
Now sending $n\to \infty$ and recalling \eqref{H4} and \eqref{E:4bis}, by \eqref{E:5} and  \eqref{E:6} we get \eqref{E:1}.
\end{proof}
\begin{theorem}\label{Perron}
Assume that there is a viscosity subsolution $w\in USC(\bar \O)$ and a viscosity supersolution $W\in LSC(\bar \O)$ of \eqref{HJ} such that $w\leq W$ and
\begin{equation}\label{E:7}
    w_*(x)=W^*(x)=g(x)\quad\text{for $x\in\pd \O$}.
\end{equation}
Let the function $u:\O\to\R$ be defined by $u(x):=\sup_{v\in X} v(x)$ where
\[X=\{v\in USC(\bar \O):\, \text{$v$ is a viscosity subsolution of \eqref{HJ} with $w\le v\le W$ on $\O$}\}.\]
Then, $u^*$ and $u_*$ are respectively  a subsolution and a supersolution to problem~\eqref{HJ} in $\O$ with $u=g$ on $\pd \O$.
\end{theorem}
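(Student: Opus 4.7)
The strategy is the Perron construction of \cite{sc}, adapted to the $(j,k)$-test function architecture at ramification points.

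First, every $v\in X$ satisfies $w\le v\le W$, so the pointwise supremum obeys $w\le u\le W$ on $\O$; taking envelopes yields $u_*\ge w_*$ and $u^*\le W^*$ globally, and combined with the hypothesis $W^*=w_*=g$ on $\pd\O$ together with the trivial bound $u_*\le u^*$ this forces $u^*=u_*=g$ on $\pd\O$. Moreover $u^*<+\infty$ on $\O$, so Lemma~\ref{Perronsub} applies to the family $X$ of subsolutions and delivers the subsolution property of $u^*$ in $\O$.

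The main content is the supersolution property of $u_*$, which I would prove by contradiction through a bump. Suppose the condition of Definition~\ref{1:def3}(ii) fails at some $x_0\in\O$. If $x_0\in\O_j\setminus\Sigma$, standard arguments yield a $C^1$ lower test function $\phi$ with $u_*-\phi$ having a strict local minimum at $x_0$ and $H^j(x_0,D^j\phi(x_0))<0$; continuity of $H^j$ produces $\eta,r>0$ with $H^j(y,D^j\phi(y))\le -\eta$ on a ball $B_r(x_0)\subset\O_j$. If instead $x_0\in\Sigma$, the negation of the supersolution condition provides $p'\in T_{x_0}\Sigma$ and $j_0\in\Inc_{x_0}$ such that for every $k\neq j_0$ a bad $(j_0,k)$ lower test function exists; fixing any $k_0\in\Inc_{x_0}\setminus\{j_0\}$ and the associated $\phi$, Remark~\ref{1:r2}(i), which uses \eqref{H4}--\eqref{H5}, ensures that $\phi$ is a classical strict subsolution on both $\O_{j_0}$ and $\O_{k_0}$ near $x_0$. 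In either case I would set $\phi_\e:=\phi+\e\chi$, with $\chi\ge 0$ a smooth cutoff supported in $B_r$, $\chi(x_0)>0$, and, in the ramification case, constructed so as to preserve the $(j_0,k_0)$-differentiability relation \eqref{differentiability} at every point of $\Sigma\cap B_r$. For $\e$ small, $\phi_\e$ is still a strict classical subsolution; Theorem~\ref{Comparison} applied on $B_r\cap S$ (with $S=\O_j$ or $S=\bar\O_{j_0}\cup\bar\O_{k_0}$) against $W$, using $\phi_\e=\phi<u_*\le W$ on $\pd B_r$, yields $\phi_\e\le W$ on $B_r\cap S$. The pasted function $\hat v$ equal to $\max(u,\phi_\e)$ on $B_r\cap S$ and to $u$ elsewhere is then a subsolution with $w\le\hat v^*\le W$, so $\hat v^*\in X$, and a sequence $y_n\to x_0$ with $u(y_n)\to u_*(x_0)$ produces $\phi_\e(y_n)-u(y_n)\to\e>0$, contradicting $u=\sup_{v\in X}v$.

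The hard part is the ramification case: the principal obstacle is to design the bump so that both $\phi_\e$ and $\hat v$ satisfy the subsolution requirements of Definition~\ref{1:def3} at every $x\in\Sigma\cap B_r$, including the $(j',k')$-upper test function condition \eqref{differentiability} for all admissible branch pairs. This dictates choosing $\chi$ symmetric under $\nu_{j_0}\leftrightarrow\nu_{k_0}$ and vanishing to first order along the normal to $\Sigma$, so that the added perturbation does not break the differentiability link at ramification points. A secondary technical point is to verify that Theorem~\ref{Comparison} is applicable on the subdomain $B_r\cap S$ with the inherited boundary data, which is precisely where the hypothesis $f<0$ in that theorem is exploited via the strict subsolution $\phi_\e$.
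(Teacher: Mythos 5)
Your treatment of the boundary values and of the subsolution property of $u^*$ matches the paper exactly (sandwich $g=w_*\le u_*\le u\le u^*\le W^*=g$, then Lemma~\ref{Perronsub}). The supersolution argument, however, has a genuine gap at ramification points of order $r\ge 3$, and it is precisely the point the paper's construction is designed to handle.

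The negation of Definition~\ref{1:def3}(ii) at $x_0\in\Sigma$ gives you, for a single $j$ and a single $p'$, a bad $(j,k)$-lower test function $\phi_k$ for \emph{every} $k\in K=\Inc_{x_0}\setminus\{j\}$, all with the same tangential projection $p'$ and the same value $u_*(x_0)$ at $x_0$. You discard all but one of these, fix a pair $(j_0,k_0)$, and paste $\hat v=\max(u,\phi_\e)$ only on $S=\bar\O_{j_0}\cup\bar\O_{k_0}$, leaving $\hat v=u$ on the remaining branches. This cannot work when $r(x_0)\ge 3$: at a point $y\in\Sigma\cap B_r$ where $\phi_\e(y)>u(y)$ (such points must exist near $x_0$, otherwise no contradiction is produced), the function $\hat v$ has an upward jump on $\Sigma$ relative to its values in any unmodified branch $\O_m$, $m\notin\{j_0,k_0\}$. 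Since $u$ is Lipschitz (Lemma~\ref{lip}), one can then build $(m,j_0)$-upper test functions of $\hat v$ at $y$ with $\pd_{\nu_{j_0}}\psi$ arbitrarily large (hence $\pd_{\nu_m}\psi$ arbitrarily negative by \eqref{differentiability}) that still touch $\hat v$ from above, and coercivity \eqref{H3} forces $H^m(y,D^m\psi(y))>0$. So $\hat v$ is not a subsolution in the sense of Definition~\ref{1:def3} and does not belong to $X$; the contradiction evaporates. Your proposed fix (a symmetric cutoff $\chi$ vanishing to first order in the normal direction) does not address this, because the defect lies not in the bump but in modifying only two of the $r$ branches.

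The paper's proof resolves exactly this by using \emph{all} the functions $\phi_k$, $k\in K$, simultaneously: it sets $\tilde v=\max_{k\in K}\phi_k$ on $\O_j$ and $\tilde v=\phi_k$ on $\O_k$, so that the replacement is defined consistently on every branch incident at $x_0$ (continuity on $\Sigma$ coming from $\phi_k(x_0)=u_*(x_0)$ and $\pi_j(D^j\phi_k(x_0))=p'$ for all $k$), and then verifies the subsolution inequality for \emph{every} pair $(l,m)$ — including mixed pairs not containing $j$ — via the estimate $|\pd_{\nu_l}\psi|=|\pd_{\nu_m}\psi|\le\max\{|\pd_{\nu_l}\phi_l|,|\pd_{\nu_m}\phi_m|\}$ combined with \eqref{H2}, \eqref{H4}, \eqref{H5}. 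This use of the full family of bad test functions is the crux of the theorem, not an optional refinement. A secondary issue: your appeal to Theorem~\ref{Comparison} on the subdomain $B_r\cap S$ is not licensed by the statement proved in the paper (which compares on all of $\O$ with data on $\pd\O$); the paper instead concludes directly that $v=\max\{u,\tilde v_\e\}$ is a subsolution via Lemma~\ref{Perronsub} and lies in $X$, avoiding any localized comparison.
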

\begin{proof}
We first show that $u$ is a subsolution of \eqref{HJ} with $u=g$ on $\pd \O$. Observe that
\begin{equation}\label{E:8}
    g(x)=w_*(x)\le u_*(x)\le u(x)\le u^*(x)\le W^*(x)=g(x)\qquad x\in\pd\O.
\end{equation}
Hence $u=u_*=u^*=g$ on $\pd \O$. Moreover, by lemma \ref{Perronsub}, $u^*$ is a viscosity subsolution of \eqref{HJ} in $\O$.
%
Let us prove by contradiction that $u_*$ is a viscosity supersolution of  \eqref{HJ} in $\O$: we assume that $u_*$ does not satisfy the  supersolution condition at some point $x_0\in\O$. We only consider the case $x_0\in\Sigma$, since the case $x_0\not\in \Sigma$ is based upon similar, but easier, arguments. We assume that there exist a vector $p_0\in T_{x_0}\Sigma$,  an index $j\in \Inc_{x_0}$
such that for any $k\in K:=\Inc_{x_0}\setminus \{j\}$,   there exists a $(j,k)$-lower test function $\phi_k$ of $u_*$ at $x_0$
with $\pi_j(D\phi_k(x_0))=p_0$ and
\begin{equation}\label{E:9}
   H^j(x_0, D^j\phi_k(x_0))=H^k(x_0,D^k\phi_k(x_0))<0.
\end{equation}
By adding a quadratic function as in the proof of lemma \ref{Perronsub}, it is not restrictive to assume that $x_0$ is a strict
minimum point for $u_*-\phi_k$, for $k\in K$.
Hence we can assume that $u_*(x_0)=\phi_k(x_0)$ and
there exists $\eta>0$ such that
\begin{equation}\label{E:8bis}
    u_*(x)-\phi_k(x)>0 \quad\text{for all $k\in K$ and $x\in \pd B_\eta(x_0)\cap(\tilde\O_j\cup \tilde\O_k) $}.
\end{equation}
where $B_\eta(x_0)=\{y\in\O:d(x_0,y)<\eta\}$. Moreover, by the continuity of $H$ and $D\phi$,
\begin{equation}\label{E:9bis}
\begin{split}
    H^j(x, D^j\phi_k(x))<0,\qquad &\forall x\in B_j:=B_\eta(x_0)\cap \td\O_j,\\
    H^k(x, D^k\phi_k(x))<0,\qquad &\forall x\in B_k:=B_\eta(x_0)\cap \td\O_k,\, k\in K.
\end{split}
\end{equation}
 Define
\[\tilde v(z):=\left\{
                  \begin{array}{ll}
                    \max_{k\in K}  \phi_k(z), & \hbox{if $z\in \O_j$,} \\
                      \phi_k(z), & \hbox{if $z\in\O_k$.}
                  \end{array}
                \right.
\]
Observe that since $\phi_k$ are continuous on $\Sigma$ with $\phi_k(x_0)=u_*(x_0)$ and $\pi_j(D^j\phi_k(x_0))=p_0$ for any $k\in K$, then
the function $\tilde v$ is continuous $B_\eta(x_0)$.
We claim that $\tilde v$ is a  subsolution of \eqref{HJ} in $B_\eta(x_0)$.
\par
\emph{Case 1:} Consider  $x\in B_\eta(x_0)\cap \Sigma$. If
  $\psi$ is  a $(l,m)$-upper test function to $\td v$ at $x$ for $l,m\in \Inc_x$, $l\neq m$, we have to show
  that
  \begin{equation}\label{E:14bis}
    H^l(x,D^l\psi(x))=\td H^l(\td x,\pd^l\psi(\td x))\le 0
  \end{equation}
where $\td x=\im_{x_0}(x)$.
Consider first the case $l, m\neq j$. By  the definition of $\td v$   it follows that
\[
 \pd_i\psi(x)=\pd_i \phi_m(x)\quad\text{for all $i=,1\dots,n-1$ and}\quad \pd_{\nu_m} \psi(x) \ge \pd_{\nu_m} \phi_m(x)
\]
Hence\begin{equation}\label{E:15}
|\pd_{\nu_m} \psi(x)|\le |\pd_{\nu_m} \phi_m(x)|\quad\text{provided that $\pd_{\nu_m} \psi(x)\le 0$}.
\end{equation}
Applying a similar argument we get that
\begin{equation}\label{E:16}
|\pd_{\nu_l} \psi(x)|\le |\pd_{\nu_l} \phi_m(x)|\quad\text{provided that $\pd_{\nu_l} \psi(x)\le 0$}.
\end{equation}
By \eqref{E:15}, \eqref{E:16} and since $\psi$ is $(l,m)$-differentiable at $x$, we have
\begin{equation}\label{E:17}
|\pd_{\nu_l} \psi(x)|=|\pd_{\nu_m} \psi(x)|\le \max\{|\pd_{\nu_l} \phi_l(x)|, |\pd_{\nu_m} \phi_m(x)|\}.
\end{equation}
By the assumptions~\eqref{H15}-\eqref{H5}, for  $s\in \Inc_x$, the function $h:\R\to\R$ defined by
\[p_n\mapsto \td h(p_n):=H^s(\td x,(\pd_1 \psi(x),\dots,\pd_{n-1} \psi(x),p_n))\]
is independent of $s$, symmetric
at $p_n=0$, and increasing in $|p_n|$. Hence by \eqref{E:9bis}, \eqref{E:17}
\begin{align*}
H^l(x,D^l \psi (x))=\td H^l(\td x,\pd^l \psi (x))=h(\partial_{\nu_l} \psi (x))\le \max\{h(\pd_{\nu_l}\phi_l(x)),\,h(\pd_{\nu_m} \phi_m(x))\}=\\
\max\{\td H^l(x, \pd^l \phi_l(x)),\,\td H^m(x,\pd^m\tilde\phi_m(x))\}=\max\{ H^l(x, D^l \phi_l(x)),\, H^m(x,D^m\phi_m(x))\}<0
\end{align*}
and we obtain \eqref{E:14bis}.\\
We now consider the case where one of the indices $l,m$ coincides with $j$, i.e. $\psi$ is a $(j,m)$-upper test function of $\td v$ at $x$.
As above we get that
 \begin{align*}
 \pd_i\psi(x)=\pd_i \phi_k(x)\quad\text{for  $i=,1\dots,n-1$, $k\in K$}\\ \pd_{\nu_j} \psi(x) \ge
 \max\{\max_{k\in K}\pd_{\nu_j} \phi_k(x),\pd_{\nu_m} \phi_m(x) \}
 \end{align*}
and therefore
\[|\pd_{\nu_j} \psi(x)|=|\pd_{\nu_m} \psi(x)|\le \max\{\max_{k\in K}|\pd_{\nu_j} \phi_k(x)|, |\pd_{\nu_m} \phi_m(x)|\}.\]
Hence we proceed as above to show the subsolution condition \eqref{E:14bis}.

\emph{Case 2:} Let $x\in B_\eta(x_0)\setminus\Sigma$. If $x\in\O_l$, $l\neq j$, then $\td v(x)=\phi_l(x)$. Since $\phi_l$ is differentiable at $x$,  we get immediately
the subsolution condition by \eqref{E:9bis}. If $l=j$, then by \eqref{E:9bis}, we have
\[H^k(x, D^k\phi_k(x))<0,\quad\text{for any $k\in K$}\]
and the claim follows by lemma \ref{Perronsub}.\\
We set $\td v_\e=\td v+\e$. Then $\td v_\e$ is still a subsolution and by \eqref{E:8bis}, for $\e$ sufficiently small,
$\td v_\e(x)<u_*(x)$ for $x\in\pd B_\eta(x_0)$. Since $u_*\le u$ on $\O$ we have $\td v_\e <u$ on $\pd B_\eta (x_0)$.
Hence the function $v:\O\to\R$ given by
\[
    v(x)=\left\{
           \begin{array}{ll}
             \max\{u(x),\td v_\e(x)\}, & \hbox{if $x\in B_\eta (x_0)$} \\
             u(x), & \hbox{if $x\not\in B_\eta (x_0)$}
           \end{array}
         \right.
\]
is upper semi-continuous in $\O$ and, by lemma \ref{Perronsub}, it is a subsolution of \eqref{HJ}. It follows that $v\in X$. Since
$v(x_0)=u_*(x_0)+\e$, there is $\bar x\in B_\eta(x_0)$ such that $v(\bar x)>u(\bar x)$ and we get a contradiction to the definition of $u$.
\end{proof}
The proof of the next   proposition  is based on  arguments similar to those used for lemma~\ref{Perronsub} and theorem \ref{Perron}
(see also \cite[prop.3.5]{sc})
\begin{proposition}\label{Perronbis}
Let $\cV$ be an arbitrary set of solutions of \eqref{HJ}. Define  $u(x):=\inf_{v\in \cV} v(x)$ for all $x\in\O$ and assume that
$u(x)\in \R$ for some  $x\in\Omega$. Then $u$ is a solution  of \eqref{HJ}.
\end{proposition}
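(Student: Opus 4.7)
The proof will parallel Lemma \ref{Perronsub} and Theorem \ref{Perron}, the key new input being the uniform local Lipschitz bound provided by Lemma \ref{lip}: every $v \in \cV$ (being a subsolution) is locally $C_K$-Lipschitz on each compact $K \subset \O$ with constant depending only on $K$. Hence the same Lipschitz bound passes to $u = \inf_{\cV} v$; together with the hypothesis $u(\bar x) \in \R$ for some $\bar x$, this gives $u \in C(\O)$ and $u \in \R$ throughout $\O$, so $u_* = u = u^*$.

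To prove the subsolution property, I would fix $x_0 \in \Sigma$ (the interior case is standard) and a $(j,k)$-upper test $\phi$ of $u$ at $x_0$; after the standard modification we may assume $\phi$ is $(j,k)$-differentiable along $\Sigma$ in a neighborhood of $x_0$. Set $\phi_\delta = \phi + \delta d_{x_0}^2$ so that $u - \phi_\delta$ admits a strict maximum at $x_0$ on $\bar B_\eta(x_0) \cap \bar\O_{j,k}$, and pick $v_n \in \cV$ with $v_n(x_0) \le u(x_0) + 1/n$. The uniform Lipschitz estimate yields $v_n(y) - u(y) \le 2 C_K d(y, x_0) + 1/n$ on $\bar B_\eta$; for $\delta$ large enough every maximizer $y_n$ of $v_n - \phi_\delta$ on $\bar B_\eta \cap \bar\O_{j,k}$ is interior and satisfies $d(x_0, y_n) = O(\delta^{-1})$. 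The $v_n$-subsolution condition, analyzed via the case split of Lemma \ref{Perronsub} (Case 1: $y_n$ in an open branch, using \eqref{H4}--\eqref{H5} to convert $H^l$ into $H^j$; Case 2: $y_n \in \Sigma$, using that normal derivatives of $d_{x_0}^2$ vanish along $\Sigma$ so $\phi_\delta$ inherits $(j,k)$-differentiability), yields $H^j(y_n, D^j\phi_\delta(y_n)) \le 0$; letting $n \to \infty$ and then $\delta \to \infty$ produces $H^j(x_0, D^j\phi(x_0)) \le 0$ by continuity of $H^j$.

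For the supersolution property I would fix $x_0 \in \Sigma$, $j \in \Inc_{x_0}$, $p' \in T_{x_0}\Sigma$ and choose $v_n \in \cV$ with $v_n(x_0) \to u(x_0)$. Each supersolution clause for $v_n$ at $(x_0, j, p')$ is witnessed by some $k_n \in \Inc_{x_0} \setminus \{j\}$; since $\Inc_{x_0}$ is finite, pigeonhole extracts a subsequence with $k_n \equiv k$. For an arbitrary $(j,k)$-lower test $\phi$ of $u$ at $x_0$ with $\pi_j D^j\phi(x_0) = p'$, the dual construction with $\phi_\delta = \phi - \delta d_{x_0}^2$ produces minimizers $y_n$ of $v_n - \phi_\delta$ converging to $x_0$, and in the limit one obtains $H^j(x_0, D^j\phi(x_0)) \ge 0$.

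The main obstacle lies in this last step: the pigeonhole chooses $k$ as a witness for $v_n$ at $x_0$, but the supersolution clause for $v_n$ at a nearby minimizer $y_n$ may name a different witnessing index $k''_n$, so $\phi_\delta$ need not directly plug into the $(j, k''_n)$-lower test clause. Resolving this uses the flatness of $\Sigma$ in the LEP setting -- so that $\phi_\delta$ stays $(j,k)$-differentiable on a whole $\Sigma$-neighborhood of $x_0$ -- together with the construction, at each $y_n$, of an auxiliary $(j, k''_n)$-lower test with the same $j$-gradient as $\phi_\delta$, so that the super inequality transfers; the compatibility conditions \eqref{H4}--\eqref{H5} then ensure this inequality reads $H^j(y_n, D^j\phi_\delta(y_n)) \ge 0$, allowing the final limit passage.
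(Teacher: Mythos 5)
The paper does not actually write this proof out: it only points to lemma~\ref{Perronsub}, theorem~\ref{Perron} and \cite[Prop.~3.5]{sc}, so supplying details is worthwhile. Your supersolution half is essentially the correct dual of lemma~\ref{Perronsub}: with $\phi_\delta=\phi-\delta d_{x_0}^2$ the minimizers $y_n$ of $v_n-\phi_\delta$ satisfy $\delta d(x_0,y_n)^2\le 1/n$ (compare $(v_n-\phi_\delta)(y_n)\le (v_n-\phi_\delta)(x_0)$ with the minimality of $x_0$ for $u-\phi$), so they converge for \emph{fixed} $\delta$ and the gradient correction vanishes; the pigeonhole on the finitely many witnesses is the right device, although the witness-transfer problem you flag is real and your ``auxiliary $(j,k''_n)$-lower test with the same $j$-gradient'' is not actually constructed: you have no information on $v_n-\psi$ on the new branch $\O_{k''_n}$, and the constraint $\partial_{\nu_{k''_n}}\psi=-\partial_{\nu_j}\psi$ prevents you from forcing the minimum there.

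The genuine gap is in the subsolution half. Your own bookkeeping exposes it: for an infimum, a maximizer $y_n$ of $v_n-\phi_\delta$ with $\phi_\delta=\phi+\delta d_{x_0}^2$ only satisfies $\delta d(x_0,y_n)^2\le \bigl(C_K+\mathrm{Lip}(\phi)\bigr)d(x_0,y_n)+1/n$, hence $d(x_0,y_n)=O(\delta^{-1})$; but then the gradient correction $2\delta d(x_0,y_n)$ is merely $O(1)$ and does \emph{not} vanish as $\delta\to\infty$. The limit you reach is $H^j(x_0,D^j\phi(x_0)+q)\le 0$ for some bounded, generally nonzero $q$, not the required inequality. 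This is not repairable by better parameter choices: for the one-dimensional coercive, even, nonconvex Hamiltonian $H(p)=(p^2-1)(p^2-4)$ the functions $x$ and $-x$ are classical solutions while their minimum $-|x|$ fails the subsolution test at $0$ against $\phi\equiv 0$ (since $H(0)=4>0$), so the infimum of solutions of a merely coercive Hamiltonian need not be a subsolution. The proposition genuinely requires the convexity hypothesis~\eqref{H6} in force where it is applied (section~\ref{section6}); there the subsolution half should be closed through the metric characterization of proposition~\ref{distance}(i), namely that subsolutions are exactly the functions satisfying $v(x)-v(y)\le S(y,x)$, a family of constraints manifestly preserved under pointwise infima, rather than through a direct test-function argument.
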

%
\section{A representation formula for the solution of the Dirichlet problem}\label{section6}
In this section, we shall assume that, beside hypotheses \eqref{H15}-\eqref{H5},  $H$ also fulfills
\begin{equation}\label{H6}
H^j(x,p) \text{ is strictly convex in $p\in\R^n$, for $x\in\tilde \O_j$}.
\end{equation}
Observe that, under assumptions~\eqref{H6} and~\eqref{H5}, hypothesis~\eqref{H2} becomes redundant (see also remark~\ref{comm_on_hyp} for others comments on the hypotheses).
Our aim is to introduce a distance-like function and to give a representation formula for the solution of the Dirichlet problem.
\begin{definition}\label{path}
Let $x,y\in \bar\O$ and $T>0$. A continuous curve $\g:[0,T]\to \bar\O$ is called a connection of $x$ and $y$ if
\begin{itemize}
   \item $\g(0)=x$, $\g(T)=y$;
   \item $\g([0,T])\subset\bar\O$;
   \item $\g$ is an absolutely continuous path in the sense that there are
$t_0:=0<t_1<\dots<t_{M+1}:=T$ such that for any $m=0,\dots, M$, we have $\g([t_m,t_{m+1}])\subset \bar \O_{j_m}\subset\R^n$ for some
$j_m\in J$ and the curve $\g_m: [t_m,t_{m+1}]\to\bar \O_{j_m}$ defined by $\g_m(t):=\gamma(t)$ is absolutely continuous. We assume that the number $t_m$ is maximal with the property that $\g([t_m,t_{m+1}])\subset \bar \O_{j_m}$, $m=1,\dots,M$.
\end{itemize}
We denote by $\cB^T_{x,y}$ the set of all connections of $x$ and $y$. For $\gamma\in\cB^T_{x,y}$, we define the length by
$\cL(\gamma):=\sum_{l=1}^M \cL(\g_l)$
where $\cL(\g_l)$ is the usual length in $\R^n$.
\end{definition}
For $x,y\in\O$, we define
\begin{equation}\label{D:1}
    S(x,y)=\inf\left\{\int_0^T L (\g(s),\dot\g(s))ds:\,T
    >0,\,\g\in \cB^T_{x,y}\right\},
\end{equation}
where $\cB^T_{x,y}$ as in definition \ref{path} and $L=(L^j)_{j\in J}$ is a family of mappings $L^j:\tilde\O_j\times T_x^j\O\to \R$ defined by
\[L^j(x,q):=\sup_{p\in\R}\{p\cdot q- H^j(x,p)\}\qquad x\in\O_j,\,q\in\R^n.\]
\begin{proposition}\label{distance}
Assume that there exists a subsolution of \eqref{HJ}. Then\par
(i)  For any $x,y\in\O$, $S(x,y)$ is finite, Lipschitz continuous in its variables, $S(x,x)=0$   and satisfies
the triangular inequality. Moreover for any subsolution $u$ of \eqref{HJ}
\begin{equation}\label{D:1bis}
u(x)-u(y)\le S(y,x)\qquad \text{for any $x$, $y\in\O$.}
\end{equation}\indent
(ii)  For any fixed $x\in\O$, $u(\cdot)=S(x,\cdot)$ is a subsolution of \eqref{HJ}
in $\O$ and a supersolution in $\O\setminus\{x\}$.
\end{proposition}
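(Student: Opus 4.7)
The strategy has three stages: derive the domination inequality~\eqref{D:1bis} by a Fenchel--Young argument along connections; deduce the structural part (i) by combining this lower bound with explicit upper bounds and concatenation; verify the viscosity properties in (ii) via forward and backward dynamic-programming (DP) estimates.

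For~\eqref{D:1bis}, observe that a subsolution $u$ is locally Lipschitz by Lemma~\ref{lip}, hence classically $C^1$ almost everywhere on each branch, and the viscosity inequality upgrades to $H^{j_m}(z,D^{j_m}u(z))\le 0$ a.e.\ on each $\bar\O_{j_m}$. Given any $\g\in\cB^T_{y,x}$ decomposed as in Definition~\ref{path}, the chain rule on each piece $[t_m,t_{m+1}]$ combined with Fenchel--Young, $p\cdot q\le H^{j_m}(z,p)+L^{j_m}(z,q)$, yields $\tfrac{d}{dt}(u\circ\g_m)(t)\le L^{j_m}(\g_m(t),\dot\g_m(t))$ for a.e.\ $t$. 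Integrating, summing over $m$ and taking an infimum gives $u(x)-u(y)\le S(y,x)$. Applied to the assumed subsolution this bounds $S$ from below; an upper bound is furnished by an explicit piecewise-affine unit-speed connection joining $x$ to $y$ inside $\bar\O$, whose Lagrangian action is finite because $L^j$ is locally bounded at bounded velocities. The triangle inequality follows by concatenation; $S(x,x)=0$ follows from the constant curve on $[0,T]$ with $T\to 0^+$ (noting $L^j(x,0)<\infty$ by~\eqref{H3}) together with~\eqref{D:1bis}; Lipschitz continuity follows from the triangle inequality together with $S(x,y)\le C\,d(x,y)$ for nearby points, again via a short unit-speed connection.

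For part (ii), set $u:=S(x,\cdot)$. The \emph{subsolution} property is a forward-DP argument: the triangle inequality combined with straight-line test curves yields $u(y_0+\e q)-u(y_0)\le\e L(y_0,q)+o(\e)$ together with $u(y_0+\e q)-u(y_0)\ge-\e L(y_0,-q)-o(\e)$ for each admissible direction $q$. Matching the second estimate against an upper test function $\phi$ and passing to $\e\to 0^+$ gives $D^j\phi(y_0)\cdot\td q-L(y_0,\td q)\le 0$ for every $\td q$, hence $H^j(y_0,D^j\phi(y_0))\le 0$ by Fenchel--Moreau; at a ramification point the $(j,k)$-differentiability~\eqref{differentiability} of $\phi$ together with~\eqref{H5} lets $q$ sweep $\bar\O_j\cup\bar\O_k$. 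For the \emph{supersolution} property on $\O\setminus\{x\}$, the case $y_0\notin\Sigma$ is classical. Given $y_0\in\Sigma\setminus\{x\}$, $j\in\Inc_{y_0}$, and $p'\in T_{y_0}\Sigma$, I would choose $k\in\Inc_{y_0}\setminus\{j\}$ by taking near-optimal connections $\g_n\in\cB^{T_n}_{x,y_0}$ with action $\le u(y_0)+1/n$ and extracting a subsequence whose last visited branch is a fixed $k$ (the degenerate case in which all near-optima approach $y_0$ through $j$ itself reduces to the classical interior supersolution argument inside $\bar\O_j$). Backward DP applied to the trimmed connection gives $u(y_0)-u(z_n)\ge\e L(y_0,q_n)-o(\e)-1/n$ with $z_n=\g_n(T_n-\e)\in\bar\O_k$ and $q_n\to q=\dot\g_n(T_n^-)$; combining with a $(j,k)$-lower test function $\phi$ satisfying $\pi_j(D^j\phi(y_0))=p'$ yields $D^k\phi(y_0)\cdot q-L(y_0,q)\ge 0$, hence $H^k(y_0,D^k\phi(y_0))\ge 0$. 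Finally~\eqref{H4}, \eqref{H5}, and~\eqref{differentiability} imply $H^j(y_0,D^j\phi(y_0))=H^k(y_0,D^k\phi(y_0))\ge 0$.

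The main obstacle is this last ramification-point supersolution argument: one must both ensure that near-optimal trajectories can be chosen to approach $y_0$ through a branch $k\neq j$ (and rigorously handle the degenerate alternative) and carry the backward DP through with careful bookkeeping of the signed normals $\nu_j,\nu_k$ so that the final identification of $H^j$ with $H^k$ on admissible gradients via~\eqref{H4}--\eqref{H5} is unambiguous. Strict convexity~\eqref{H6} enters to guarantee that $L^j$ is well-defined and superlinear, and that the near-optimal tangent directions admit limits to which the DP inequality can be specialized.
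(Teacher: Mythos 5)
Your argument for part (i) and for the subsolution half of part (ii) is essentially the paper's: part (i) is dispatched as standard (the paper defers to the literature for exactly the Fenchel--Young/concatenation argument you sketch), and for the subsolution property the paper runs the same forward dynamic-programming estimate along the short curves $\g_t(s)=\im_y^{-1}((1-s/t)\td z_t)$, splitting at a ramification point into the cases where the test curve lies on $\Sigma$ or inside a single branch and then invoking \eqref{H4}--\eqref{H5} and the Fenchel--Moreau identity $H=L^*$ (which is where \eqref{H6} is used). Where you genuinely diverge is the supersolution half of (ii). You propose a backward dynamic-programming argument along near-optimal connections, selecting the branch $k$ as the last branch visited by a subsequence of near-minimizers and handling separately the degenerate case in which they approach through $\O_j$ itself. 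The paper instead avoids all of this: from \eqref{D:1bis} and the already-established subsolution property it deduces the maximality characterization $u(y)=\max\{v(y):\ v \text{ subsolution of \eqref{HJ}},\ v(x)=0\}$, and then argues by contradiction exactly as in the Perron construction of theorem~\ref{Perron} --- a failure of the supersolution inequality at some $z\neq x$ permits the bump construction of a strictly larger subsolution, contradicting maximality. The paper's route is shorter and reuses the delicate $(j,k)$-test-function work already done in theorem~\ref{Perron}; your route is more constructive and control-theoretic, but it carries a real technical burden that the maximality argument sidesteps: you must justify existence and convergence of the terminal velocities of near-minimizers (which requires coping with the free time horizon $T$ and possible oscillation of trajectories between branches or along $\Sigma$ near $y_0$), and you must verify that the chosen $k$ works uniformly over all admissible $p'$. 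These points are fixable under \eqref{H3} and \eqref{H6}, and you correctly identify them as the main obstacle, but as written they remain to be carried out; if you want the short proof, prove \eqref{D:7} first and lean on theorem~\ref{Perron}.
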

\begin{proof}
The proof of (i) is standard and for details we refer to \cite{i}.
For (ii), we first prove that $u$ is a subsolution in $\O$. We consider $y\in  \O$ and we distinguish two cases.\par
\emph{Case 1: $y\in\O_j$ for some $j\in J$.}
We recall that, near $y$, $\O$ is locally  homeomorphic to a $n$-dimensional Euclidean space via the canonical identification chart
$(V_{y},\im_{y})$  (see prop. \ref{canonical}). Recall also that $\im_{y}(y)=0$. For $q\in\R^n$, we consider $t>0$ sufficiently small in such a way that $\td z_t:=tq\in \im_y(V_y)$ and set $z_t=\im_y^{-1}(tq)\in V_y$.
Define a trajectory $\g\in\cB_{z_t,y}^t$  by $\g_t(s)=\im_{y}^{-1}((1-s/t)\td z_t)$, $s\in [0,t]$.
Let $\psi$ be an upper test function for $u(\cdot)=S(x,\cdot)$ at $y$, hence
$u(y)-\psi(y)\ge u(z)-\psi(z)$ for $z\in V_y$.
We want to show that
\begin{equation}\label{D:2}
    H^j(y,D^j\psi(y))\le 0.
\end{equation}
We have
\begin{align*}
& \pd^j \psi(0)\cdot q=   \pd^{j}(\psi\circ \im_y^{-1})(0)\cdot q=\lim_{t\to 0^+}\frac{\psi (\im_y^{-1}(\td z_t))-\psi(\im_y^{-1}(0))}{t}\\&\le \lim_{t\to 0^+}\frac{S(x,z_t)-S(x,y)}{t} \le \lim_{t\to 0^+}\frac{S(z_t,y)}{t}\le \lim_{t\to 0^+}\frac{1}{t}\int_0^t L^j(\g_t(s),\dot\g_t(s))ds=\\
&  \lim_{t\to 0^+}\frac{1}{t}\int_0^t L^j\big(\im_y^{-1}((1-s/t)\td z_t),q\big)ds
=\td L(0,q)
\end{align*}
where  $\td L^j(  y,q)=L^j(\im_y^{-1}(y),q)$ is the dual function of the Hamiltonian $\td H^j( y,p)$ defined in \eqref{Hcanonical}. By the
previous inequality and the arbitrariness of $q$
we get $\td H^j( 0, \pd^j \psi(0))=\sup_{q\in\R^n}\{\pd^j\psi(0)\cdot q-\td L^j(0,q)\}\le 0$ and therefore \eqref{D:2} thanks to \eqref{Hcanonical}.\par
\emph{Case 2: $y\in\Sigma$.}
In this case, near $y$, $\O$ is locally homeomorphic to an elementary ramified space  $\cR^n_{r(y)}$. As before we consider the canonical immersion
$(V_{y},\im_{y})$. Let $\psi$ be an $(j,k)$-upper test function to $u(\cdot)=S(x,\cdot)$ at $y$.
For $q\in \im_y(\tilde \O_j\cup\tilde \O_k)$, we consider $z_t$ and $\tilde z_t$ as before and we introduce the trajectory $\g\in\cB_{z_t,y}^t$ by $\g_t(s)=\im_{y}^{-1}((1-s/t)\td z_t)$, $s\in [0,t]$. We have two cases
\begin{align}
    &\text{$\g_t(s)\in\Sigma\cap V_y$ for any $s\in [0,t]$ }\label{D:3}\\
    &\text{$\g_t(s)\in\O_m\cap V_y$ for any $s\in [0,t]$ for either $m=j$ or $m=k$.}\label{D:4}
\end{align}
If \eqref{D:3} is satisfied, since $\g_t\in\td\O_j\cap\td\O_k$, arguing as above we get  that
\begin{equation}\label{D:5}
\pd^m\psi(0)\cdot q-\td L^m(0,q)\le 0\qquad   \text{for $m=k, j$.}
\end{equation}
In other case,  we get
\[
\pd^m\psi(0)\cdot q-\td L^m(0,q)\le 0 \qquad \text{ for either $m=j$ or $m=k$.}
\]
If $m=k$, since $\psi$ is differentiable and \eqref{H4} holds, we have
\begin{equation}\label{D:6}
   \pd^k\psi(0)\cdot q-\td L^k(0,q)\le 0= (\pd_1\psi(0),\dots,\pd_{n-1}\psi(0),-\pd_{\nu_j}\psi(0))\cdot q-\td L^j(0,q)\le 0
\end{equation}
Recalling \eqref{H5}, by \eqref{D:5} and \eqref{D:6}, we get \eqref{D:2}.\\
The proof that $u$ is a supersolution  is based on the same argument of
the proof of theorem~\ref{Perron}. In fact, by \eqref{D:1bis} and the part already proved, it follows immediately that
\begin{equation}\label{D:7}
    u(y)=\max\{v(y):\,\text{$v$ is a subsolution of \eqref{HJ} s.t. $v(x)=0$}\}.
\end{equation}
If we assume by contradiction that $u$ is not a supersolution at some point $z\in\O\setminus\{y\}$,
we get a contradiction to \eqref{D:7} showing  as in the proof of theorem \ref{Perron} that there exists a subsolution $v$ of \eqref{HJ} such that $v(z)>u(z)$.
\end{proof}
In the following theorem, we give a representation formula for the solution of the Dirichlet problem for \eqref{HJ}.
\begin{theorem}
Assume that
\begin{equation}\label{H7}
\begin{split}
 &\text{there exists a differentiable function $\psi$ and a continuous function  $h$   }\\
    &\text{with $h<0$  such that  $H(x,D \psi)\le h(x)$ for $x\in\O$.}
    \end{split}
\end{equation}
Consider the Hamilton-Jacobi equation \eqref{HJ} with the boundary condition
\begin{equation}\label{BC}
    u(x)=g(x),\qquad x\in \partial\O
\end{equation}
where $g:\pd \O\to \R$ is a continuous function satisfying
\begin{equation}\label{H8}
    g(x)-g(y)\le S(y,x)\qquad\text{for any $x$, $y\in  \partial \O$.}
\end{equation}
Then the unique viscosity solution of \eqref{HJ}--\eqref{BC} is given by
\[u(x):=\min\{g(y)+S(y,x):\, y\in  \partial \O\}.\]
\end{theorem}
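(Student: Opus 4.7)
The plan is to split the proof into existence, boundary values, and uniqueness. For existence, I will exhibit $u$ as an infimum of a naturally parametrized family of solutions and invoke Proposition~\ref{Perronbis}; for uniqueness, I will use Theorem~\ref{Comparison} after a standard convex-combination perturbation with the function $\psi$ from~\eqref{H7}.

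For existence, fix $y\in\pd\O$ and set $v_y(x):=g(y)+S(y,x)$. By Proposition~\ref{distance}(ii), $v_y$ is a viscosity subsolution of \eqref{HJ} in $\O$ and a viscosity supersolution in $\O\setminus\{y\}$; since $y\in\pd\O$ lies outside $\O$, $v_y$ is in fact a continuous viscosity solution of \eqref{HJ} on all of $\O$ (continuity follows from Proposition~\ref{distance}(i)). The formula
\[
u(x)=\inf_{y\in\pd\O}v_y(x)
\]
is finite by Proposition~\ref{distance}(i), so Proposition~\ref{Perronbis} applied to $\cV:=\{v_y:y\in\pd\O\}$ yields that $u$ is a viscosity solution of \eqref{HJ} in $\O$. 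The infimum is attained, hence the ``$\min$'', because $\pd\O$ is compact and the map $y\mapsto v_y(x)$ is continuous (again Proposition~\ref{distance}(i)).

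For the boundary condition, take $x\in\pd\O$. Choosing $y=x$ gives $u(x)\le g(x)+S(x,x)=g(x)$. Conversely, for every $y\in\pd\O$, assumption~\eqref{H8} yields $g(x)-g(y)\le S(y,x)$, whence $g(y)+S(y,x)\ge g(x)$; taking the minimum over $y$ gives $u(x)\ge g(x)$.

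For uniqueness, let $\tilde u\in C(\bar\O)$ be any viscosity solution of \eqref{HJ}--\eqref{BC}. Theorem~\ref{Comparison} cannot be applied directly because it requires the subsolution to satisfy $H(x,Du)=f(x)$ with $f<0$. To produce such a strict subsolution, invoke assumption~\eqref{H7} and the convexity~\eqref{H6}: for $\l\in(0,1)$, set
\[
u_\l:=\l\tilde u+(1-\l)\psi-c_\l,\qquad c_\l:=\max_{\pd\O}\bigl((1-\l)(\psi-g)\bigr),
\]
so that $u_\l\le g=u$ on $\pd\O$. Using the test-function characterization (if $\phi$ is an upper test function for $u_\l$ at $x$, then $\l^{-1}(\phi-(1-\l)\psi+c_\l)$ is an upper test function for $\tilde u$ at $x$) together with the convexity of $H^j(x,\cdot)$ and the bound $H(x,D\psi)\le h(x)$, one verifies that $u_\l$ is a viscosity subsolution of $H(x,Du)=(1-\l)h(x)$, whose right-hand side is strictly negative. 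Theorem~\ref{Comparison} then gives $u_\l\le u$ on $\bar\O$, and letting $\l\to 1^-$ yields $\tilde u\le u$. Exchanging the roles of $u$ and $\tilde u$ (using $u$ as subsolution and $\tilde u$ as supersolution) yields $u\le\tilde u$, hence $u=\tilde u$.

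The main obstacle is the rigorous execution of the convex-combination trick at ramification points, because the definition of viscosity solution there involves the nonstandard class of $(j,k)$-test functions. The key point is that the differentiability condition~\eqref{differentiability} is linear, so it is preserved under affine combinations, and an upper $(j,k)$-test function $\phi$ for $u_\l$ produces a corresponding upper $(j,k)$-test function for $\tilde u$; the remainder of the verification at ramification points is then a direct application of convexity of $H^j(x,\cdot)$ and the compatibility conditions~\eqref{H4}--\eqref{H5}.
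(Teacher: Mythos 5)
Your proposal is correct and follows essentially the same route as the paper's proof: the representation formula is recognized as a solution via Proposition~\ref{Perronbis} combined with Proposition~\ref{distance}(ii), the boundary condition is extracted from~\eqref{H8}, and uniqueness is obtained by the convex combination with $\psi$ from~\eqref{H7} followed by Theorem~\ref{Comparison} and a symmetrization. The only cosmetic difference is that you perturb the arbitrary solution $\tilde u$ and normalize with the explicit constant $c_\l$, whereas the paper perturbs $u$ itself and ``adds a constant to $\psi$''; since both arguments are applied in both directions at the end, the two executions are interchangeable.
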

\begin{proof}
First observe that if   $u(x)\neq g(x)$ for  some $x\in   \partial \O$, then there  is  $z\in   \partial \O$
such that $g(x)>S(z,x)+g(z)$, which gives  a contradiction to \eqref{H8}. Hence $u=g$ on $\pd \O$.\\
By proposition \ref{Perronbis}   and  proposition \ref{distance}.(ii), $u$ is a solution of \eqref{HJ}. To show that it is the unique solution, assume that there exists another solution $v$ of \eqref{HJ}-\eqref{BC}. For $\theta\in (0,1)$ define
$u_\theta :=\theta u+(1-\theta)\psi$, where $\psi$ as in \eqref{H7}. By adding a constant it is not restrictive to assume that $\psi$ is sufficiently small in such a way that
 \begin{equation}\label{4:2}
 u_\theta(x)\le u(x),\qquad x\in \O.
 \end{equation}
If $x\in  \O_j$ and  $\phi$ is  an upper test
function of $u$ at $x$, we set  $\phi_\theta:=\theta\phi +(1-\theta)\psi$  and we obtain by means of convexity of $H$
 \begin{equation}\label{4:3}
    H^j(x, D^j \phi_\theta)\le \theta H^j(x,D^j\phi)+(1-\theta)H^j(x,D^j\psi)\le (1-\theta)h(x).
\end{equation}
If  $x\in \Sigma$  and
$\phi$  is an  $(j,k)$-upper test function of $u$ at $x$, then   $\phi_\theta:=\theta\phi +(1-\theta)\psi$  is a  $(j,k)$-upper test function of $u_\theta$ at $x$ and we again obtain \eqref{4:3}. Hence $u_\theta$ is a viscosity subsolution
of
\[ H(x, D^j u)\le (1-\theta)h(x).\]
Applying theorem \ref{Comparison} with $f=(1-\theta)h$ and \eqref{4:2}, we get  $u_\theta\le v$ for all $\theta\in (0,1)$. Letting $\theta$ tend to $1$ yields
$u\le v$. Exchanging the role of $u$ and $v$ we conclude that $u=v$ in $\O$.
\end{proof}
\begin{remark}
The assumption \eqref{H7} is satisfied if $H(x,0)<0$ for any $x\in \O$ by taking   $\psi\equiv c\in\R$. If $f>0$  in $\O$  then the Hamiltonian
$H=(H_j)_{j\in J}$, where $H^j(x,p)=|p|^2-f^j(x)$ (see example \ref{eikonal}),  satisfies this assumption.
\end{remark}
\bibliographystyle{amsplain}

\end{document}